\newtheorem{thm}{Theorem}[section]
\newtheorem{cor}[thm]{Corollary}
\newtheorem{lem}[thm]{Lemma}
\newtheorem{prop}[thm]{Proposition}
\theoremstyle{definition}
\newtheorem{rem}[thm]{Remark}
\newcommand{\ccc}{\mathbf c}
\newcommand{\ppp}{\mathbf p}
\newcommand{\qqq}{\mathbf q}
\newcommand{\bbb}{\mathbf b}
\newcommand{\mmm}{\mathbf m}
\newcommand{\PP}{\mathcal P}
\newcommand{\zC}{\mathbb C}
\newcommand{\zR}{\mathbb R}
\newcommand{\zN}{\mathbb N}
\newcommand{\zK}{\mathbb K}
\begin{document}

\title[The polarization constant of finite dimensional complex spaces is one]{The polarization constant of finite dimensional complex spaces is one}

\thanks{This work was partially supported by CONICET PIP 11220130100483, CONICET PIP 11220130100329  and ANPCyT PICT 2015-2299.}

\subjclass[2010]{46G25, 47A07, 15A69, 46T25.}
\keywords{Polarization constants, homogeneous polynomials, multilinear forms, convergence of holomorphic functions}

\author{Ver\'onica Dimant}
\address{Departamento de Matem\'{a}tica y Ciencias, Universidad de San
Andr\'{e}s, Vito Dumas 284, (B1644BID) Victoria, Buenos Aires,
Argentina and CONICET} \email{vero@udesa.edu.ar}

\author{Daniel Galicer}
\address{Departamento de Matem\'{a}tica, Facultad de Ciencias Exactas y Naturales, Universidad de Buenos Aires, (1428) Buenos Aires,
Argentina and CONICET} \email{dgalicer@dm.uba.ar}

\author[J. T. Rodr\'{i}guez]{Jorge Tom\'as Rodr\'{i}guez}
\address{Departamento de Matem\'{a}tica and NUCOMPA, Facultad de Cs. Exactas, Universidad Nacional del Centro de la Provincia de Buenos Aires, (7000) Tandil, Argentina and CONICET}
\email{jtrodrig@dm.uba.ar}

\begin{abstract} 
The polarization constant of a Banach space $X$ is defined as $$\ccc(X):= \limsup\limits_{k\rightarrow \infty} \ccc(k, X)^\frac{1}{k},$$
where $\ccc(k, X)$ stands for the best constant $C>0$ such that $ \Vert \overset{\vee}{P} \Vert \leq C \Vert P \Vert$ for every $k$-homogeneous polynomial $P \in \mathcal P(^kX)$.
We show that if $X$ is a finite dimensional complex space then $\ccc(X)=1$.  
We derive some consequences of this fact regarding the convergence of analytic functions on such spaces.

The result is no longer true in the real setting. Here we relate this constant with the so-called  Bochnak's complexification procedure.

We also study some other properties connected with polarization. Namely, we provide necessary conditions related with the geometry of $X$  for $c(2,X)=1$ to hold. Additionally we link polarization constants with certain estimates of the nuclear norm of the product of polynomials. 
\end{abstract}

\maketitle

\section{Introduction}

%This work is centred in the study of the \emph{polarization constants} of Banach spaces. Bellow we introduce this topic, as well as some of the notation that we will use. 

The polarization constants appear naturally  when relating polynomials with multilinear functions. Given a Banach space $X$  over the field $\zK$ (where $\zK$ can  be either the complex numbers $\zC$ or the real numbers $\zR$), a mapping $P:X\to \zK$ is a (continuous) $k$-homogeneous  polynomial if there exists a $k$-linear symmetric mapping ${T:\underbrace{X\times\cdots \times X}_{k\,\,\, times} \to \zK}$ (continuous) such that $P(\mathbf{x})=T(\mathbf{x},\dots,\mathbf{x})$ for all $\mathbf{x}\in X$. By the polarization formula (see for instance \cite[Corollary 1.6]{dineen1999complex}) 
\begin{equation}\label{formula de polarizacion}
T(\mathbf{x}_1, \dots, \mathbf{x}_k) = \frac{1}{k!2^k} \sum_{\varepsilon_i = \pm 1} \varepsilon_1 \dots \varepsilon_k P\left(\sum_{i=1}^k \varepsilon_i \mathbf{x}_i\right), 
\end{equation}
this map is unique and it is written  $\overset{\vee}{P}=T$. The space of continuous $k$-homogeneous polynomials on a Banach space $X$ is denoted by $\mathcal P(^kX)$ and this is a Banach space when endowed with the uniform norm
$$\Vert P \Vert=\sup_{\Vert \mathbf{x} \Vert =1} \vert P(\mathbf{x})\vert.$$

From Equation \eqref{formula de polarizacion} the following polarization inequality easily holds
\begin{equation}\label{ec polarizacion}
\Vert \overset{\vee}{P} \Vert \leq \frac{k^k}{k!} \Vert P \Vert,
\end{equation}
for every $P \in \mathcal P(^kX)$ and all Banach space $X$. The polarization constant $\frac{k^k}{k!}$ is the best possible for the general case. Indeed, if $X=\ell_1$ there is a norm one $k$-homogeneous polynomial $P \in \mathcal P(^k\ell_1)$ such that $\Vert \overset{\vee}{P} \Vert = \frac{k^k}{k!}$ (see for example \cite{H}).  
On the other hand a classical result of Banach \cite{banach1938homogene} asserts that if $\mathcal{H}$ is a Hilbert space then  $\Vert \overset{\vee}{P} \Vert = \Vert P \Vert$, for every $P \in \mathcal P(^k\mathcal{H})$. Therefore it is natural to define \cite[Definition 1.40]{dineen1999complex}, given a fixed Banach space $X$, its so-called \emph{$k$-polarization constant}
\begin{equation} \label{def k-constante}
\ccc(k,X) := \inf\{C>0 : \Vert \overset{\vee}{P} \Vert \leq C \Vert P \Vert, \; \mbox{for all } P \in \mathcal P(^kX) \},
\end{equation}
 and also its \emph{polarization constant} 
\begin{equation} \label{def constante}
\ccc(X):= \limsup\limits_{k\rightarrow \infty} \ccc(k, X)^\frac{1}{k}.
\end{equation}
From inequality \eqref{ec polarizacion} and Stirling's formula we have $1 \leq \ccc(X) \leq e$, where the leftmost value is attained for $X=\ell_2$ and the rightmost value is attained for $X=\ell_1$. The interest of knowing the value of $\ccc(X)$ relies on the fact that it provides accurate  hypercontractive inequalities of the form: 
\begin{equation}
    \Vert \overset{\vee}{P} \Vert \leq C^k \Vert P \Vert, \; \mbox{for all } P \in \mathcal P(^kX) \mbox{ and all $k$ large enough.}
\end{equation}

Our main result shows that the norm of a $k$-homogeneous polynomial over a finite dimensional complex Banach space and the norm of its associated $k$-linear form are quite close, provided $k$ is large enough. Precisely,

\begin{thm}\label{teo fin dim} For any finite dimensional complex Banach space $X$, we have that $$\ccc(X)=1.$$
\end{thm}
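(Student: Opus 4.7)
My plan is to show that $\ccc(k, X)$ grows at most polynomially in $k$, which, together with the trivial bound $\ccc(X) \geq 1$, immediately gives $\ccc(X) = 1$. The whole argument would be reduced to a single integral estimate on the torus.

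First I would pass from the real polarization formula \eqref{formula de polarizacion} to its continuous complex analogue using characters of the $k$-torus $\mathbb{T}^k$---equivalently, by applying a multidimensional Cauchy formula to the degree-$k$ polynomial $(\zeta_1, \ldots, \zeta_k) \mapsto P(\sum_j \zeta_j \mathbf{x}_j)$---to obtain
$$\overset{\vee}{P}(\mathbf{x}_1, \ldots, \mathbf{x}_k) = \frac{1}{k!}\int_{\mathbb{T}^k} \overline{\zeta_1 \cdots \zeta_k}\, P\!\left(\textstyle\sum_j \zeta_j \mathbf{x}_j\right) d\sigma(\zeta),$$
where $\sigma$ is the normalized Haar measure. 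Taking moduli for $\mathbf{x}_j$ in the unit ball of $X$ gives
$$|\overset{\vee}{P}(\mathbf{x}_1, \ldots, \mathbf{x}_k)| \leq \frac{\Vert P \Vert}{k!} \int_{\mathbb{T}^k} \left\|\textstyle\sum_j \zeta_j \mathbf{x}_j\right\|^k d\sigma(\zeta).$$

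The heart of the argument would then be what I call the \emph{averaging lemma}: for every finite-dimensional complex Banach space $X$ there is a function $p_X(k)$, polynomial in $k$, such that
$$\sup_{\Vert \mathbf{x}_j \Vert \leq 1} \int_{\mathbb{T}^k} \left\|\textstyle\sum_j \zeta_j \mathbf{x}_j\right\|^k d\sigma(\zeta) \leq p_X(k)\cdot k!.$$
Combined with the integral representation above, this yields $\ccc(k, X) \leq p_X(k)$, and so $\ccc(X) = \limsup_{k\to\infty} p_X(k)^{1/k} = 1$.

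The main obstacle is establishing this averaging lemma. The crudest pointwise bound $\|\sum \zeta_j \mathbf{x}_j\| \leq k$ only yields the trivial $k^k$ estimate (reproducing \eqref{ec polarizacion}), while a comparison to a Hilbert space of the same dimension via John's ellipsoid---which would use the Hilbert case \emph{\`a la} Banach to get $\ccc(k,\mathcal H)=1$---introduces a factor $n^{k/2}$ that ruins the rate, producing only $\ccc(X) \leq \sqrt n$. To avoid both traps, my plan is to exploit two features specific to the complex finite-dimensional setting: first, the dimension $\binom{n+k-1}{k}$ of $\mathcal P(^kX)$ grows only polynomially in $k$; second, on $\zC^n$ one has Markov/Bernstein-type inequalities comparing the sup norm of a polynomial to its integral norms on appropriate compact sets. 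Combining these with the integral representation, I would hope to show that the polarization operator $P \mapsto \overset{\vee}{P}$, viewed as a linear bijection between two finite-dimensional spaces, has operator norm at most polynomial in $k$, which is exactly what is needed. Upgrading the exponential $(\sqrt n)^k$ loss from the naive ellipsoid argument into a polynomial-in-$k$ loss is precisely the delicate point that must be overcome.
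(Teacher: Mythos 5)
Your argument reduces the theorem to the ``averaging lemma,'' and that is exactly where it stops: you state yourself that proving it is the delicate point and only list tools you hope might work, so as written there is no proof. More seriously, the lemma is false, already for $X=\zC$, where $\ccc(k,\zC)=1$ trivially. Take $\mathbf{x}_1=\dots=\mathbf{x}_k=1$; the quantity you need to bound is then $\int_{\mathbb{T}^k}|\zeta_1+\dots+\zeta_k|^{k}\,d\sigma(\zeta)$, and this is \emph{not} $O\bigl(p(k)\,k!\bigr)$. Indeed, writing $k=2m$ and expanding, orthogonality of the monomials gives
$$\int_{\mathbb{T}^k}\Bigl|\sum_{j=1}^{k}\zeta_j\Bigr|^{2m}d\sigma(\zeta)=(m!)^2\sum_{|\alpha|=m}\ \prod_{i=1}^{k}\frac{1}{(\alpha_i!)^2},$$
where $\alpha$ runs over $\mathbb{N}_0^{k}$. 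The multi-indices with entries in $\{0,1\}$ alone contribute $\binom{2m}{m}(m!)^2=(2m)!=k!$, and a Stirling count of the multi-indices having about $\epsilon m$ entries equal to $2$ (each weighted by $4^{-\epsilon m}$) shows the full sum exceeds $k!\,e^{ck}$ for some $c>0$ (numerically the integral behaves like $k!\cdot(1.08\dots)^k$; the saddle-point value $\inf_{x>0}I_0(2\sqrt{x})^2/x\approx 4.78>4$ gives the same conclusion). Since the left-hand side of your chain is just $\|P\|$ in this example, the exponential loss is intrinsic to the step $|P(\sum_j\zeta_j\mathbf{x}_j)|\le\|P\|\,\|\sum_j\zeta_j\mathbf{x}_j\|^k$, which throws away the oscillation of $P$ on the torus. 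No Markov/Bernstein inequality or dimension count of $\mathcal P(^kX)$ can repair this, because the failure already occurs in dimension one, where all of those ingredients are trivial; at best this route yields $\ccc(X)\le e^{c}$ for some universal $c>0$, not $\ccc(X)=1$.

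For comparison, the paper's proof avoids any such averaging. It uses Sarantopoulos's exact formula \eqref{Prop Sarant finite dim} for $\ccc(k,\ell_1^d)$ and a direct Stirling-type computation to show $\ccc(k,\ell_1^d)^{1/k}\to 1$ (Proposition \ref{ele-1}), and then a quotient lemma (Lemma \ref{lem cociente l1}) saying every finite dimensional $X$ is a $(1+\varepsilon)$-quotient of some $\ell_1^d$, which gives $\ccc(k,X)\le(1+\varepsilon)^k\,\ccc(k,\ell_1^d)$. Note that this only controls $\ccc(k,X)$ up to a factor $(1+\varepsilon)^k$, which is enough because the polarization constant is a limsup of $k$-th roots; your stronger claim of polynomial growth of $\ccc(k,X)$ is not needed (and is not established by your argument), even though it does hold for $\ell_1^d$ by the explicit formula.
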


As a consequence of Theorem \ref{teo fin dim} we present an application regarding the convergence of analytic functions defined on finite dimensional spaces.
Namely, we show in Corollary \ref{radio de convergencia}  that the radius of convergence of a holomorphic function in several complex variables can be computed in terms of the norms of the symmetric multilinear mappings associated to the polynomials of the Taylor series expansion.

On the other hand, we prove that $c(\ell_1^d(\mathbb R)) > 1$ showing that Theorem \ref{teo fin dim} is no longer valid in the real case. 
In addition, we show that for finite dimensional real spaces the polarization constant coincides with, what we call, the Bochnak's complexification constant, and therefore is bounded by 2. 

All the results that appear above are treated in Section \ref{Finite dimensional spaces}. We also deal with some other problems related with polarization constants.

The aforementioned result of S. Banach (for the particular case $k=2$) says that $\ccc(2,\mathcal H)=1$, if $\mathcal{H}$ is a Hilbert space. Note that this is equivalent to the well-known identity valid for every self-adjoint operator $T \in \mathcal L (\mathcal{H})$:
$ \Vert T \Vert= \sup_{\Vert \mathbf{x} \Vert = 1} \vert \langle T\mathbf{x}, \mathbf{x} \rangle \vert$. 
This equality can also be reinterpreted for general Banach spaces $X$, which is again equivalent to the fact that $\ccc(2,X)=1$.
In the real case, the equality $\ccc(2,X)=1$ forces $X$ to be a Hilbert space, see \cite{benitez1993characterization}.
In the complex setting, there are non Hilbert spaces satisfying the above property. We show, in Section \ref{Type and Cotype and the symmetric operator norm property} that, in terms of type and cotype  those spaces $X$ with $\ccc(2,X)=1$ ``look like'' Hilbert spaces.

Additionally, we relate polarization constants with certain estimates of the nuclear norm of the product of functionals/polynomials. 
Recall that a $k$-homogeneous polynomial $P\in \mathcal P(^kX)$ is nuclear if  there exist bounded sequences $(\varphi_j)_j\in X^*$ and $(\lambda_j)_j\in\ell_1$ such that
\begin{equation}\label{ec nuclear}
 P(\mathbf{x})=\sum_{j=1}^\infty \lambda_j \varphi_j(\mathbf{x})^k,\quad\textrm{for all }\mathbf{x}\in X.   
\end{equation}
 The space $\mathcal P_N(^kX)$ of nuclear $k$-homogeneous polynomials on $X$ is a Banach space when endowed with the norm
$$\|P\|_{\mathcal P_{N}(^{k}X)}=\inf\{\sum_{j=1}^\infty |\lambda_j| \|\varphi_j\|^k\},
$$
where the infimum is taken over all the representations of $P$ as in \eqref{ec nuclear}.

We show in Section \ref{nuclear} that if $X^*$ has the approximation property, then $\ccc(k,X^*)$ is exactly  the best constant $C>0$ such that for any functionals $\varphi_1, \dots, \varphi_k \in X^*$ the following inequality holds
\begin{equation}\label{producto}
\Vert \varphi_1 \cdots \varphi_k\Vert_{\mathcal P_{N}(^{k}X)}\leq  C  \Vert \varphi_1 \Vert \cdots \Vert \varphi_k \Vert,
\end{equation}
where $\varphi_1 \cdots \varphi_k$ is the $k$-homogeneous polynomial given by the pointwise product of the linear functionals. 

Moreover, we study the best constant $\mmm(k_1,\ldots,k_n,X)$ such that for any nuclear homogeneous polynomials $P_1, \ldots, P_n$  of degrees $k_1,\ldots, k_n$ respectively, we have that
\begin{equation}\label{problema}
\Vert P_1 \cdots P_n\Vert_{\mathcal P_{N}(^{k}X)}\leq  \mmm(k_1,\ldots,k_n,X)  \Vert P_1 \Vert_{\mathcal P_{N}(^{k_1}X)} \cdots \Vert P_n \Vert_{\mathcal P_{N}(^{k_n}X)},
\end{equation}
where, as before, $P_1 \cdots P_n$ is the homogeneous polynomial of degree $k = \sum_{i=1}^nk_i$ given by pointwise product; and show that $\mmm(k_1,\ldots,k_n,X)$ is intimately linked with the  polarization constants. 
Note that the best constant $C>0$ that fulfills Equation \eqref{producto} is exactly $\mmm(k_1,\ldots,k_n,X)$ for $n=k$ and $k_i=1$ for all $1 \leq i \leq k.$

It is important to remark that  formula  \eqref{producto} considers the \emph{nuclear norm} of the product of linear functionals. The reader should not mistake this with estimating the \emph{uniform norm} of the product of linear functionals. This analogous problem involves the \emph{linear polarization constant},  whose name is similar to the constant studied in this article and may cause some confusion. For more information on the linear polarization constant we refer the reader to the articles \cite{anagnostopoulos2006polarization, benitez1998lower, carando2017linear, pappas2004linear, revesz2004plank} and the references therein.

\section{Finite dimensional spaces}\label{Finite dimensional spaces}

The key ingredient to prove Theorem \ref{teo fin dim} is to treat first the case where $X=\ell_1^d(\mathbb C)$, the complex $\ell_1$-space of dimension $d$ (which is expected to be the worst one).
Our argument will heavily rely on the following result of Sarantopoulos \cite[Proposition 4]{S2}:
\begin{equation}\label{Prop Sarant finite dim}
\ccc(k, \ell_1^d(\mathbb C))=\max \left\{\frac{k_1!\cdots k_d!}{k !}\frac{k^k}{k_1^{k_1}\cdots k_d^{k_d}}: k_1+\cdots + k_d =k\right\}.
\end{equation}

\begin{prop}\label{ele-1} For any non negative integer $d$,  $\ccc(\ell_1^d(\mathbb C))=1$.
\end{prop}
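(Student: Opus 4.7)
My plan is to reduce the claim to an elementary asymptotic analysis of Sarantopoulos's explicit formula \eqref{Prop Sarant finite dim}. Setting
\[
f_k(k_1,\ldots,k_d) := \frac{k_1!\cdots k_d!}{k!}\cdot \frac{k^k}{k_1^{k_1}\cdots k_d^{k_d}},
\]
that formula asserts $\ccc(k,\ell_1^d)=\max f_k$ over compositions of $k$ into $d$ nonnegative parts. It suffices therefore to prove that this maximum grows at most polynomially in $k$, for then $\ccc(k,\ell_1^d)^{1/k}\to 1$.

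The tool is Stirling's inequality in the form $\sqrt{2\pi n}(n/e)^n \le n! \le e\sqrt{n}(n/e)^n$ ($n\ge 1$), with indices $k_i=0$ dropped without loss (they contribute trivial factors, and correspond to the evident monotonicity $\ccc(k,\ell_1^{d'})\le \ccc(k,\ell_1^d)$ for $d'\le d$). Substituting the upper bound for each $k_i!$ in the numerator and the lower bound for $k!$ in the denominator of $f_k$, the exponential terms $(k_i/e)^{k_i}$ and $(k/e)^k$ telescope---precisely because $\sum k_i=k$---and their non-exponential parts cancel against the factor $k^k/\prod k_i^{k_i}$. After cancellation only a product of square roots survives, yielding
\[
f_k(k_1,\ldots,k_d) \le \frac{e^d}{\sqrt{2\pi}}\sqrt{\frac{k_1\cdots k_d}{k}} \le \frac{e^d}{\sqrt{2\pi}}\,k^{(d-1)/2},
\]
the last step using $k_i\le k$ for every $i$.

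From $\ccc(k,\ell_1^d)\le C_d\,k^{(d-1)/2}$ it follows immediately that $\ccc(k,\ell_1^d)^{1/k}\to 1$. Combined with the trivial lower bound $\ccc(k,\ell_1^d)\ge 1$ (since $P(\mathbf{x})=\overset{\vee}{P}(\mathbf{x},\ldots,\mathbf{x})$ yields $\|P\|\le \|\overset{\vee}{P}\|$), we conclude $\ccc(\ell_1^d)=1$.

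I do not foresee any substantial obstacle in this approach: Sarantopoulos's theorem already does the heavy lifting, and the remaining content is the standard observation that $k^k/\prod k_i^{k_i}$ is, up to polynomial-in-$k$ corrections, exactly the quantity Stirling produces when simplifying $k!/\prod k_i!$. The only bookkeeping is handling partitions with zero parts, which is handled by the monotonicity remark above.
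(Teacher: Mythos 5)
Your proof is correct, and it finishes the argument by a genuinely different route than the paper, although both start from Sarantopoulos's formula \eqref{Prop Sarant finite dim}. The paper first proves a small combinatorial lemma locating the maximizer: using the monotonicity of $\bigl(\tfrac{x+1}{x}\bigr)^x$ it shows the maximum over compositions is attained at the balanced one $(c+1,\ldots,c+1,c,\ldots,c)$ with $k=dc+r$, and then computes $\ccc(dc+r,\ell_1^d)^{1/(dc+r)}\to 1$ along each residue class $r$ via the elementary limits $c!^{1/c}/c\to 1/e$ and $m/m!^{1/m}\to e$. You bypass the localization of the maximizer entirely: the two-sided Stirling bound $\sqrt{2\pi n}\,(n/e)^n\le n!\le e\sqrt{n}\,(n/e)^n$ applied uniformly to every composition (zero parts being harmless, as you note) gives $\ccc(k,\ell_1^d)\le \tfrac{e^d}{\sqrt{2\pi}}\,k^{(d-1)/2}$, and any polynomial bound in $k$ forces the $k$-th roots to $1$; together with the trivial inequality $\Vert P\Vert\le\Vert\overset{\vee}{P}\Vert$ this yields $\ccc(\ell_1^d)=1$. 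Your telescoping computation checks out (the factors $e^{-\sum k_i}$ and $e^{-k}$ cancel because $\sum k_i=k$, and $\prod k_i\le k^d$ gives the stated exponent), so the argument is complete. What your route buys is a cleaner and quantitatively stronger statement (an explicit polynomial upper bound for $\ccc(k,\ell_1^d)$, uniform over all compositions, with no case analysis over $r$); what the paper's route buys is the identification of the extremal composition, which pins down the exact value of $\ccc(k,\ell_1^d)$ and uses only bare-hands limits rather than Stirling estimates with explicit constants.
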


\begin{proof} For any non negative integer $m$, the maximum of the set
$\left\{ \frac{i!j!}{i^ij^j} : i,j\in \zN,\ i+j=m\right\}$
is attained at $i=\frac{m}{2}$, $j=\frac{m}{2}$ if $m$ is even, and at $i=[\frac{m}{2}]+1$, $j=[\frac{m}{2}]$ if $m$ is odd. This can be deduce, for example, from the fact that if $i>j$ then
$$\frac{i!j!}{i^ij^j} \leq \frac{(i-1)!(j+1)!}{(i-1)^{i-1}(j+1)^{j+1}}.$$ Indeed, this is equivalent to
$$\left(\frac{j+1}{j}\right)^j \leq  \left(\frac{i}{i-1}\right)^{i-1},$$
which holds because  $\left( \frac{x+1}{x}\right)^x$ is an increasing function.

From this we derive that the maximum in \eqref{Prop Sarant finite dim} is attained at $\underbrace{c+1,\ldots,c+1}_{r\,\,\, times }, \underbrace{c,\ldots,c}_{d-r\,\,\, times }$, where $c,r\in \zN_0$ are such that $k=d\,c+r$ and $0\leq r <d$. In other words,

\begin{equation} \label{maximum}
\ccc(k,\ell_1^d(\mathbb C)) = \frac{(c+1)!^r c!^{d-r}}{(c+1)^{(c+1)r} c^{c(d-r)}}\frac{k^k}{k !}.    
\end{equation}

Now, since $\left(\frac{k^k}{k!}\right)^{\frac1k}\xrightarrow[k\rightarrow \infty]{}e$, in order to prove that $\ccc(k,\ell_1^d(\mathbb C))^{\frac1k}\xrightarrow[k\rightarrow \infty]{}1$ we need to check, for $r=0,\ldots, d-1$, that $$\left(\frac{(c+1)!^r c!^{d-r}}{(c+1)^{(c+1)r} c^{c(d-r)}}\right)^\frac{1}{d\,c+r} \xrightarrow[c\rightarrow \infty]{} \frac1e.$$
Indeed,
\begin{eqnarray*}
\frac{(c+1)!^r c!^{d-r}}{(c+1)^{(c+1)r} c^{c(d-r)}}&=& \frac{c!^r c!^{d-r}}{(c+1)^{cr} c^{c(d-r)}} =\frac{c!^d}{(c+1)^{cr}c^{c(d-r)}}\\
&= & \frac{c!^d}{c^{cd}} \left(\frac{c}{c+1}\right)^{cr}= \left(\frac{c!}{c^c}\right)^{d}\left(\frac{c}{c+1}\right)^{cr}.
\end{eqnarray*}
Therefore,
$$
\left(\frac{(c+1)!^r c!^{d-r}}{(c+1)^{(c+1)r} c^{c(d-r)}}\right)^\frac{1}{d\,c+r}= \left[\left(\frac{c!}{c^c}\right)^{\frac1c}\right]^{\frac{cd}{dc+r}} \left(\frac{c}{c+1}\right)^{\frac{cr}{dc+r}}\xrightarrow[c\rightarrow \infty]{} \frac1e,
$$
which completes the proof.
\end{proof}

Bellow we give an alternative proof, which is shorter, due to one of the anonymous referees of this article. The up side of the original proof is that gives the exact value of the maximum on \eqref{Prop Sarant finite dim}, which is explicitly written in \eqref{maximum}. 
\begin{proof}[Alternative proof of Proposition \ref{ele-1}] We use the following inequality due to Stirling formula
\begin{equation*}
    \sqrt{2\pi}\, j^{j+\frac{1}{2}}e^{-j} \leq j! \leq e\,j^{j+\frac{1}{2}}e^{-j}.
\end{equation*}

This, combined with the arithmetic-geometric mean inequality, gives
\begin{eqnarray}
\frac{k_1!\cdots k_d!}{k !}\frac{k^k}{k_1^{k_1}\cdots k_d^{k_d}} &\leq& \frac{e^d}{\sqrt{2\pi}k^{\frac{1}{2}}}(k_1\cdots k_d)^{\frac{1}{2}} \nonumber\\
&\leq & \frac{e^d}{\sqrt{2\pi} k^{\frac{1}{2}}} \left( \frac{k_1+\cdots + k_d}{d} \right)^{\frac{d}{2}} \nonumber\\ 
&\leq & \frac{e^d}{\sqrt{2\pi}d^\frac{d}{2}}k^\frac{d-1}{2}.\nonumber\
\end{eqnarray}
Therefore we conclude $\ccc(\ell_1^d(\mathbb C))=\limsup_{k\to \infty} \ccc(k, \ell_1^d(\mathbb C))^\frac{1}{k} = 1$.
\end{proof}

The following lemma, which is surely known, asserts that every finite dimensional space is  ``almost'' a quotient of a finite dimensional $\ell_1$-space. We include a simple proof since we could not find a proper reference.

\begin{lem}\label{lem cociente l1} Given a finite dimensional Banach space $X$ and $\varepsilon>0$, there is $d=d(\varepsilon,X)\in \zN$ and a norm one surjective linear operator $\mathbf{q}:\ell_1^d\rightarrow X$ such that for every $\mathbf{x}\in X$ there is $\mathbf{z}\in  \ell_1^d$ with $\mathbf{q}(\mathbf{z})=\mathbf{x}$ and $\Vert \mathbf{z}\Vert_1 < (1+\varepsilon)\Vert x \Vert$.
\end{lem}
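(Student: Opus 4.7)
The plan is to realize $X$ as the image, via a contractive surjection, of a suitably high dimensional $\ell_1^d$ whose basis is mapped to a fine net of the unit sphere $S_X$. Concretely, I would fix $\delta > 0$ small enough that $\frac{1}{1-\delta} < 1+\varepsilon$ (e.g.\ $\delta < \frac{\varepsilon}{1+\varepsilon}$), then, using compactness of $S_X$ in the finite dimensional space $X$, pick a finite $\delta$-net $\{x_1, \dots, x_d\} \subset S_X$, and define $\mathbf{q} \colon \ell_1^d \to X$ by $\mathbf{q}(e_i) = x_i$ extended linearly. The bound $\|\mathbf{q}(\mathbf{z})\| \leq \sum_i |z_i|\,\|x_i\| = \|\mathbf{z}\|_1$ together with $\mathbf{q}(e_i) = x_i$ gives $\|\mathbf{q}\| = 1$.

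The main step is producing, for each unit vector $\mathbf{x}\in X$, a preimage of small $\ell_1$-norm. I would use a greedy approximation scheme: set $\mathbf{x}_0 = \mathbf{x}$ and, at stage $k\ge 1$, if $\mathbf{x}_{k-1}\neq 0$ pick an index $i_k$ with $\bigl\|\mathbf{x}_{k-1}/\|\mathbf{x}_{k-1}\| - x_{i_k}\bigr\| \leq \delta$, and define $\lambda_k = \|\mathbf{x}_{k-1}\|$, $\mathbf{x}_k = \mathbf{x}_{k-1} - \lambda_k x_{i_k}$. A straightforward induction yields $\|\mathbf{x}_k\| \leq \delta^k$ and $|\lambda_k| \leq \delta^{k-1}$; thus $\mathbf{x}_k \to 0$ in $X$ while the series $\sum_k \lambda_k e_{i_k}$ converges absolutely in $\ell_1^d$ to an element $\mathbf{z}$ satisfying $\|\mathbf{z}\|_1 \leq \sum_{k\geq 1} \delta^{k-1} = \frac{1}{1-\delta} < 1+\varepsilon$.

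Continuity of $\mathbf{q}$ then gives $\mathbf{q}(\mathbf{z}) = \sum_k \lambda_k x_{i_k} = \mathbf{x}$, proving both surjectivity and the estimate $\|\mathbf{z}\|_1 < (1+\varepsilon)\|\mathbf{x}\|$; the general case follows by homogeneity. There is no real obstacle in this argument; the only delicate point is tuning $\delta$ in terms of $\varepsilon$ so the geometric tail produced by the greedy step adds up below $1+\varepsilon$, and verifying that a finite $\delta$-net on $S_X$ exists, which is immediate from $\dim X < \infty$.
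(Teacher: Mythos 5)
Your proof is correct and follows essentially the same route as the paper: define $\mathbf{q}$ by sending the canonical basis to a $\delta$-net of $S_X$, then run a greedy successive-approximation scheme whose residuals decay geometrically, giving a preimage of $\ell_1$-norm at most $\frac{1}{1-\delta} < 1+\varepsilon$. The only difference is cosmetic (you normalize the residual at each step, while the paper scales the net vector by the residual's norm), so there is nothing to add.
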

\begin{proof}
Take $0<\eta<1$ such that $\frac{1}{1-\eta}< (1+\varepsilon)$. Let $\{\mathbf{h}_1,\ldots, \mathbf{h}_d\}\subseteq S_X$ be an $\eta-$net. Let us define $\mathbf{q}:\ell_1^d\rightarrow X$ over the elements of the canonical basis $\{\mathbf{e}_1,\ldots, \mathbf{e}_d\}$ of $\ell_1^d$ as $\mathbf{q}(\mathbf{e}_j)=\mathbf{h}_j.$ By the triangle inequality, $\Vert \mathbf{q} \Vert \leq 1$.

Now, fixed $\mathbf{x}\in S_X$ we need to find $\mathbf{z}\in  \ell_1^d$ such that $\mathbf{q}(\mathbf{z})=\mathbf{x}$ and $\Vert \mathbf{z}\Vert_1 < (1+\varepsilon)$. Take $\delta_1=1$. Let $\mathbf{h}_{n_1}$ be an element of the $\eta-$net such that $$\delta_2:=\Vert \mathbf{x}-\mathbf{h}_{n_1} \Vert < \eta.$$ Now take $\mathbf{h}_{n_2}$ such that $$\delta_3:=\Vert (\mathbf{x}-\mathbf{h}_{n_1})-\delta_2 \mathbf{h}_{n_2} \Vert < \delta_2 \eta < \eta^2.$$ Following this process we construct a sequence $(\mathbf{h}_{n_j})_{j\in \zN}$ such that
$$\left\Vert \mathbf{x} - \sum_{j=1}^{m+1} \mathbf{h}_{n_j} \right\Vert < \eta^m. $$
Clearly $\mathbf{x}=\sum_{j=1}^\infty \delta_j  \mathbf{h}_{n_j}$ and therefore, if we take $\mathbf{z}=\sum_{j=1}^\infty \delta_j  \mathbf{e}_{n_j}$, we have that $\mathbf{x}=\mathbf{q}(\mathbf{z})$ and
$$
\Vert \mathbf{z}\Vert \leq \sum_{j=1}^\infty \delta_j 
<  \sum_{j=1}^\infty \eta^{j-1} 
<  \frac{1}{1-\eta}< 1+\varepsilon,  
$$
which concludes the proof.
\end{proof}

Now, we are ready to prove Theorem \ref{teo fin dim}.

\begin{proof}[Proof of Theorem \ref{teo fin dim}.]

Let $X$ be a finite dimensional complex Banach space. Given $\varepsilon>0$, we first show there is $d=d(\varepsilon,X)\in \zN$ such that
\begin{equation}\label{acotacion c}
    \ccc(k, X)\leq (1+\varepsilon)^k \ccc(k, \ell_1^d(\mathbb C)).
\end{equation}

Indeed, given a $k$-homogeneous polynomial $P$ and $\mathbf{x}_1,\ldots, \mathbf{x}_k\in S_X$, we need to see that
$$|\overset{\vee}{P}(\mathbf{x}_1,\ldots, \mathbf{x}_k)| \leq (1+\varepsilon)^k \ccc(k,\ell_1^d(\mathbb C)) \Vert P \Vert.$$

Let $\mathbf{q}:\ell_1^d(\mathbb C)\rightarrow X$ be as in the previous lemma. Take $\mathbf{z}_1,\ldots,\mathbf{z}_k\in \ell_1^d(\mathbb C)$ such that $\mathbf{q}(\mathbf{z}_j)=\mathbf{x}_j$ and $\Vert \mathbf{z}_j\Vert < 1+\varepsilon$. Note that the multilinear form $\overset{\vee}{P}\circ (\mathbf{q},\ldots, \mathbf{q})$ has norm less than or equal to one and also its associated polynomial is just $P\circ \mathbf{q}$. Then we have
\begin{eqnarray}
|\overset{\vee}{P}(\mathbf{x}_1,\ldots, \mathbf{x}_k)| &= & |\overset{\vee}{P}\circ (\mathbf{q},\ldots,\mathbf{q}) (\mathbf{z}_1,\ldots, \mathbf{z}_k)|\nonumber  \\
&\leq & \Vert P \circ \mathbf{q}\Vert \Vert \mathbf{z}_1\Vert \cdots \Vert \mathbf{z}_k\Vert \ccc(k, \ell_1^d(\mathbb C))\\
&< & \Vert P \Vert(1+\varepsilon)^k c(k, \ell_1^d(\mathbb C)). \nonumber \
\end{eqnarray}

Thus, using Proposition \ref{ele-1} the proof of the theorem follows from inequality \eqref{acotacion c}.

\end{proof}

\subsection{Consequences}

Let $X$ and $Y$ be normed spaces and $U \subset X$ be an open set. Recall that a function $f : U \to Y$  is holomorphic if it is Fr\'echet differentiable at every point of $U$. 
We denote by $H(U;Y)$ the space of holomorphic functions from $U$ to $Y$. Given $f \in H(U;Y)$ for each $\mathbf{a} \in U$  there is a sequence of  polynomials $P_k$, $k=0, 1, 2, \dots,$ with $P \in \mathcal P(^k X;Y)$ such that
\begin{align}
    f(\mathbf{x}) = \sum_{k=0}^\infty P_k(\mathbf{x}-\mathbf{a})
\end{align}
uniformly in a ball centered at $\mathbf{a}$ contained in $U$. 

The supremum of all $r>0$ such that the series converges uniformly on the ball $B(\mathbf{a},r)$ is called the radius of convergence and can be computed by the Cauchy-Hadamard formula
\begin{align}
    R^\mathbf{a}(f)= \frac{1}{\limsup_{k\to \infty} \Vert P_k \Vert^{1/k}}.
\end{align}

For our purposes it is also interesting to consider the following value:
\begin{align}
R_{\mbox{mult}}^\mathbf{a}(f):=\frac{1}{\limsup_{k\to \infty} \Vert \overset{\vee}{P}_k \Vert^{1/k}}.
\end{align}
%where $\overset{\vee}{P}_k$ stands for the symmetric $k$-linear mappings associated to $P_k$.

It is clear that $R_{\mbox{mult}}^\mathbf{a}(f) \leq R^\mathbf{a}(f)$ for every $f$ and $\mathbf{a}$.
The following result characterizes a reverse inequality in terms of the polarization constant of $X$.
\begin{prop}
Let $X$ be a normed space and $\mathbf{a} \in X$. 
Then, the polarization constant $\ccc(X)$ is the minimum of all $C>0$ such that
\begin{equation}\label{relacion radios}
R^{\mathbf{a}}(f) \leq C R_{mult}^{\mathbf{a}}(f),
\end{equation}
for every normed space  $Y$ and every $Y$-valued holomorphic function $f$ defined in a neighborhood of $\mathbf{a}$.

\end{prop}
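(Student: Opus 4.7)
The plan is to prove two things: first, that $C=\ccc(X)$ satisfies \eqref{relacion radios} for every $f$, and second, that no strictly smaller $C$ does.

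For the upper bound, given $f \in H(U;Y)$ with Taylor expansion $f(\mathbf{x}) = \sum_{k} P_k(\mathbf{x}-\mathbf{a})$ at $\mathbf{a}$, I would apply the defining polarization inequality $\Vert \overset{\vee}{P}_k \Vert \leq \ccc(k,X)\,\Vert P_k \Vert$ to each Taylor coefficient, take $k$-th roots, and pass to $\limsup$. Using the elementary inequality $\limsup_k (a_k b_k) \leq \limsup_k a_k \cdot \limsup_k b_k$ for nonnegative sequences, one obtains
\[
\limsup_k \Vert \overset{\vee}{P}_k \Vert^{1/k} \;\leq\; \left(\limsup_k \ccc(k,X)^{1/k}\right)\left(\limsup_k \Vert P_k \Vert^{1/k}\right) \;=\; \frac{\ccc(X)}{R^{\mathbf{a}}(f)},
\]
which, by the definition of $R^{\mathbf{a}}_{mult}(f)$, is exactly $R^{\mathbf{a}}(f) \leq \ccc(X)\, R^{\mathbf{a}}_{mult}(f)$.

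For the lower bound, I would realize the ratio $\ccc(X)$ by an explicit construction with $Y=\zC$. By definition of $\ccc(X)$ pick a subsequence $(k_j)_j$ with $\ccc(k_j,X)^{1/k_j}\to \ccc(X)$, and by definition of each $\ccc(k_j,X)$ as a supremum choose $P_{k_j}\in\mathcal{P}(^{k_j}X)$ with $\Vert P_{k_j}\Vert=1$ and $\Vert \overset{\vee}{P}_{k_j}\Vert\geq (1-1/k_j)\,\ccc(k_j,X)$. Setting $P_k=0$ for $k\notin\{k_j\}$, define
\[
f(\mathbf{x}):=\sum_{k} P_k(\mathbf{x}-\mathbf{a}).
\]
The bound $|P_k(\mathbf{y})|\leq \Vert \mathbf{y}\Vert^{k}$ makes this series converge uniformly on each closed ball $\overline{B}(\mathbf{a},r)$ with $r<1$, so $f\in H(B(\mathbf{a},1);\zC)$ and its Taylor coefficients at $\mathbf{a}$ are exactly the $P_k$. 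Since $\Vert P_k\Vert^{1/k}\in\{0,1\}$ one gets $R^{\mathbf{a}}(f)=1$, while $\Vert \overset{\vee}{P}_{k_j}\Vert^{1/k_j}\to\ccc(X)$ (combined with the trivial upper bound $\Vert \overset{\vee}{P}_k\Vert\leq \ccc(k,X)$ for the other indices) gives $\limsup_k \Vert \overset{\vee}{P}_k\Vert^{1/k}=\ccc(X)$, hence $R^{\mathbf{a}}_{mult}(f)=1/\ccc(X)$. Thus $R^{\mathbf{a}}(f)/R^{\mathbf{a}}_{mult}(f)=\ccc(X)$, forcing any admissible $C$ to satisfy $C\geq \ccc(X)$.

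The main technical issue is careful bookkeeping of the $\limsup$s: in the upper bound one needs submultiplicativity for nonnegative sequences, and in the lower bound one must combine the choice-based lower estimate $(1-1/k_j)^{1/k_j}\ccc(k_j,X)^{1/k_j}\to \ccc(X)$ with the general polarization upper estimate to identify $\limsup_k \Vert \overset{\vee}{P}_k\Vert^{1/k}$ exactly. Nothing beyond Cauchy--Hadamard and the very definitions of $\ccc(k,X)$ and $\ccc(X)$ is required.
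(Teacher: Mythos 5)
Your proof is correct and follows essentially the same route as the paper: the upper bound comes from applying the polarization inequality termwise to the Taylor coefficients and using Cauchy--Hadamard, and the lower bound from a lacunary series of norm-one, nearly extremal polynomials along a subsequence realizing $\ccc(X)$ (the paper organizes this last step as a contradiction with an intermediate $\delta$, while you compute the extremal ratio $R^{\mathbf{a}}(f)/R^{\mathbf{a}}_{mult}(f)=\ccc(X)$ directly, which is a cosmetic difference). The only point to make explicit is that in the upper bound you apply $\Vert \overset{\vee}{P}_k\Vert \leq \ccc(k,X)\Vert P_k\Vert$ to $Y$-valued Taylor coefficients, which requires the easy Hahn--Banach remark (stated in the paper) that the scalar constant $\ccc(k,X)$ also governs vector-valued polynomials.
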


\begin{proof}
It is enough to prove the case where $\mathbf{a}=0$; for simplicity we denote $R^0(f)=R(f)$ and $R^0_{mult}(f)=R_{mult}(f)$. Let $I(X)$ be the minimum of all $C>0$ such that Equation \eqref{relacion radios} holds. 
It easy to see that
\begin{equation}
    \Vert \overset{\vee}{P} \Vert \leq \ccc(k,X) \Vert P \Vert, \; \mbox{for all } P \in \mathcal P(^kX;Y) \mbox{ and \emph{any} normed space $Y$.}
\end{equation}
Let $f \in H(U;Y)$ with Taylor expansion
$f(\mathbf{x}) = \sum_{k=0}^\infty P_k(\mathbf{x})$. Given $\varepsilon>0$ we have
\begin{align}\label{acotacion normas}
 \Vert P_k \Vert \leq \Vert \overset{\vee}{P}_k \Vert \leq (\ccc(X)+\varepsilon)^k \Vert P_k \Vert,
\end{align}
for  $k$ large enough.
Then,
$$ \frac{R(f)}{\ccc(X)+\varepsilon} \leq R_{\mbox{mult}}(f) \leq R(f),$$
for every $\varepsilon>0$, therefore $R(f) \leq \ccc(X) R_{\mbox{mult}}(f)$ and $I(X) \leq \ccc(X)$. 

Suppose $I(X)<\delta<\ccc(X)$, then there is a sequence of degrees $(k_j)_{j \in \mathbb N}$  such that $\ccc(k_j,X)^{1/k_j}>\delta$ for all $j$. 
Now, for each $j \in \mathbb N$ pick a norm one polynomial
$P_{k_j} \in \mathcal P(^{k_j}X)$ such that $\Vert \overset{\vee}{P}_{k_j} \Vert \geq \delta^{k_j}.$ 
Hence for $f = \sum_{j=1}^{\infty} P_{k_j} \in H(X)$, we have $R(f)=1$ and $R_{\mbox{mult}}(f)< \frac{1}{\delta}$. This provides a contradiction since 
\begin{equation}
    1=R(f) \leq I(X) R_{\mbox{mult}}(f)< \frac{I(X)}{\delta} < 1.
\end{equation}
Therefore $I(X)=\ccc(X).$
\end{proof}

Observe that the previous proposition implies that $\ccc(X)=1$ if and only if 
\begin{equation}
R^{\mathbf{a}}(f) = R_{\mbox{mult}}^{\mathbf{a}}(f),
\end{equation}
for every $\mathbf{a} \in X$, every normed space  $Y$ and every $f \in  H(U;Y)$, where $U$ is an open set containing $\mathbf{a}$.
Thus, as a consequence of Theorem \ref{teo fin dim} we obtain the following corollary.

\begin{cor} \label{radio de convergencia}
Let $X$ be a finite dimensional space and $Y$ be an arbitrary normed space. For each $\mathbf{a} \in X$ we have $R^\mathbf{a}(f)=R_{\mbox{mult}}^\mathbf{a}(f)$ for every holomorphic function $f \in H(U;Y)$, where $U$ is any open set containing $\mathbf{a}$.
\end{cor}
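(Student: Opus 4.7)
The plan is to derive this corollary immediately from the combination of Theorem \ref{teo fin dim} and the preceding proposition. Since $X$ is a finite dimensional complex Banach space (holomorphy is a complex notion, so we implicitly work over $\zC$), Theorem \ref{teo fin dim} gives $\ccc(X)=1$. Substituting this value into the characterization provided by the previous proposition yields the inequality $R^{\mathbf{a}}(f) \leq R_{\mbox{mult}}^{\mathbf{a}}(f)$ for every $\mathbf{a}\in X$, every normed space $Y$, and every $f\in H(U;Y)$ with $\mathbf{a}\in U$.

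To finish, I would combine this with the reverse inequality $R_{\mbox{mult}}^{\mathbf{a}}(f)\le R^{\mathbf{a}}(f)$, which is noted in the text and follows from $\Vert P_k\Vert\le \Vert \overset{\vee}{P}_k\Vert$ (a trivial consequence of evaluating the symmetric $k$-linear form on the diagonal) together with the Cauchy–Hadamard formula. Equality of the two radii then follows.

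One small caveat worth addressing: the proof of the preceding proposition was written only for scalar-valued holomorphic functions when constructing the witness $f = \sum_j P_{k_j}$, but the upper bound half of that proposition (the only direction actually used here) was already stated for $Y$-valued $f$ using $\Vert \overset{\vee}{P}\Vert\le \ccc(k,X)\Vert P\Vert$ in $\mathcal{P}(^kX;Y)$ for arbitrary $Y$. Thus there is no obstacle in extending the conclusion to arbitrary normed spaces $Y$, and no genuinely hard step in the argument — the entire content of the corollary is packaged in Theorem \ref{teo fin dim}.
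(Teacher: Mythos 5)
Your argument is correct and is exactly the route the paper takes: the corollary is obtained by combining Theorem \ref{teo fin dim} ($\ccc(X)=1$) with the preceding proposition, together with the trivial inequality $R_{\mbox{mult}}^{\mathbf{a}}(f)\le R^{\mathbf{a}}(f)$. Your remark that only the upper-bound half of the proposition (valid for arbitrary $Y$-valued $f$) is needed is also consistent with the paper's treatment.
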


Let $X$ be an $n$-dimensional space and $Y$ be a normed space. Each polynomial $P \in \mathcal P(^kX;Y)$ can be written as
$$ P(\mathbf{z}) = \sum_{\vert \alpha \vert =k} c_{\alpha} z_1^{\alpha_1} \dots z_n^{\alpha_n},$$
where $\mathbf{z}=(z_1, \dots, z_n).$ Therefore if the monomial expansion 
$$
\sum_{k=0}^\infty \sum_{\vert \alpha \vert =k} c_{\alpha} z_1^{\alpha_1} \dots z_n^{\alpha_n}
$$
converges uniformly and absolutely on a given set then the same happens to the power series $\sum_{k=0}^\infty P_k(\mathbf{z})$, where $P_k(\mathbf{z})=\sum_{\vert \alpha \vert =k} c_{\alpha} z_1^{\alpha_1} \dots z_n^{\alpha_n}.$

A reciprocal result was proved in \cite[Proposition 4.6]{mujica2010complex}. Following his arguments together with Equation \eqref{acotacion normas} and applying our main result we are allowed to expand the region of convergence from $\frac{R}{e} B_{\ell_1^n(\mathbb C)}$ to $RB_{\ell_1^n(\mathbb C)}$. 

\begin{prop}
Let $X$ be an $n$-dimensional space and $Y$ be a normed space. Consider  a power series from $X$ into $Y$, $\sum_{k=0}^\infty P_k(\mathbf{x})=\sum_{k=0}^\infty \overset{\vee}{P}_k(\mathbf{x}^k)$, with radius of convergence $R>0$. Given unitary vectors $\mathbf{e}_1, \dots, \mathbf{e}_n \in X$, set
$$
c_{\alpha}=\frac{k!}{\alpha!} \overset{\vee}{P}_k(\mathbf{e}_1^{\alpha_1},  \dots, \mathbf{e}_n^{\alpha_n}),
$$
for each $\alpha \in \mathbb N_0^{(\mathbb N)}$ with $\vert \alpha \vert =k$. Then we have, for $\Vert (z_1, \dots, z_n) \Vert_{\ell_1^n(\mathbb C)} \leq R$, the equality
$$
\sum_{k=0}^\infty P_k(z_1 \mathbf{e}_1 + \dots + z_n \mathbf{e}_n) = \sum_{\alpha} c_{\alpha} z_1^{\alpha_1} \dots z_n^{\alpha^n},
$$
and both series converge absolutely and uniformly for $\Vert (z_1, \dots, z_n) \Vert_{\ell_1^n(\mathbb C)} \leq r$ where $0 < r < R.$
\end{prop}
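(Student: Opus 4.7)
The plan is to expand each polynomial $P_k$ evaluated at $z_1 \mathbf{e}_1 + \cdots + z_n \mathbf{e}_n$ as a sum of monomials via the symmetric multilinear form $\overset{\vee}{P}_k$, bound the resulting coefficients $c_\alpha$ in terms of $\|\overset{\vee}{P}_k\|$, and then invoke Theorem~\ref{teo fin dim} to upgrade this bound so that the monomial series converges on the full open ball of $\ell_1^n$-radius $R$, rather than on the smaller ball of radius $R/e$ obtained via the classical polarization inequality.

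First, I would use multilinearity and symmetry to write
$$
P_k(z_1 \mathbf{e}_1 + \cdots + z_n \mathbf{e}_n) = \overset{\vee}{P}_k\bigl((z_1 \mathbf{e}_1 + \cdots + z_n \mathbf{e}_n)^k\bigr) = \sum_{|\alpha|=k} \frac{k!}{\alpha!}\,\overset{\vee}{P}_k(\mathbf{e}_1^{\alpha_1},\ldots,\mathbf{e}_n^{\alpha_n})\,z_1^{\alpha_1}\cdots z_n^{\alpha_n},
$$
which is exactly $\sum_{|\alpha|=k} c_\alpha\, z_1^{\alpha_1}\cdots z_n^{\alpha_n}$. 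Since $\|\mathbf{e}_i\|=1$, the bound $|c_\alpha| \leq \frac{k!}{\alpha!}\,\|\overset{\vee}{P}_k\|$ and the multinomial theorem yield
$$
\sum_{|\alpha|=k} |c_\alpha|\,|z_1|^{\alpha_1}\cdots |z_n|^{\alpha_n} \leq \|\overset{\vee}{P}_k\|\,\|(z_1,\ldots,z_n)\|_{\ell_1^n}^k.
$$

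Next, I would invoke the crucial input: by Theorem~\ref{teo fin dim}, $\ccc(X)=1$, and hence by \eqref{acotacion normas}, for every $\varepsilon>0$ and all sufficiently large $k$, $\|\overset{\vee}{P}_k\| \leq (1+\varepsilon)^k \|P_k\|$. Combined with the Cauchy--Hadamard formula, this gives $\limsup_k \|\overset{\vee}{P}_k\|^{1/k} = 1/R$. Fixing $0<r<R$ and choosing $\rho$ with $r<\rho<R$, I would obtain $\|\overset{\vee}{P}_k\| \leq \rho^{-k}$ for all $k$ large, so the double series $\sum_k \sum_{|\alpha|=k} |c_\alpha z^\alpha|$ is dominated by the convergent geometric series $\sum_k (r/\rho)^k$ uniformly on $\{\|z\|_{\ell_1^n} \leq r\}$. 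Absolute convergence then justifies the rearrangement (Fubini for counting measure) identifying the iterated sum over $k$ with the sum over all multi-indices $\alpha$, which is the claimed equality.

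The hard part is precisely the passage from the classical estimate $\|\overset{\vee}{P}_k\| \leq (k^k/k!)\|P_k\|$ of \eqref{ec polarizacion}, which via Stirling would only guarantee convergence on $(R/e)\,B_{\ell_1^n}$ as in \cite[Proposition 4.6]{mujica2010complex}, to the sharper asymptotic $\|\overset{\vee}{P}_k\| \leq (1+\varepsilon)^k \|P_k\|$. That improvement is exactly the content of Theorem~\ref{teo fin dim}; once it is in hand, everything else is routine majorization of a geometric-type series.
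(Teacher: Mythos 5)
Your argument is correct and coincides with the paper's intended proof: the paper merely points to \cite[Proposition 4.6]{mujica2010complex} and observes that replacing the classical bound \eqref{ec polarizacion} by the estimate \eqref{acotacion normas} coming from Theorem \ref{teo fin dim} enlarges the region of convergence from $\frac{R}{e}B_{\ell_1^n}$ to $RB_{\ell_1^n}$, which is exactly the multinomial expansion, the bound $|c_\alpha|\le \frac{k!}{\alpha!}\Vert \overset{\vee}{P}_k\Vert$ and the geometric majorization you carry out. The only cosmetic mismatch is that the statement writes $\Vert(z_1,\dots,z_n)\Vert_{\ell_1^n}\le R$ while your (and Mujica's) argument yields the identity on the open ball, i.e.\ uniformly on each closed ball of radius $r<R$, which is all the paper's sketch establishes as well.
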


\subsection{Real case}  \label{real case}

Now we consider finite dimensional {\it real} spaces. One could speculate that the polarization constant behaves as in the finite dimensional complex case or as in the infinite dimensional real case. Nevertheless, none of these situations actually happen.

We exhibit below an example showing that a finite dimensional real normed space can have polarization constant bigger than 1, contrary to what we have proved in Theorem \ref{teo fin dim} for complex spaces. We also  show that each finite dimensional real normed space has polarization constant less than or equal to 2 (recall that in the infinite dimensional case the upper bound $e$ cannot be improved). 

In order to do this we  rely heavily on the complexification procedure for Banach spaces. We refer the reader to \cite{munoz1999complexifications} and the references therein for information on this subject.

We just recall that if $X$ is a real Banach space and $\widetilde X$ denotes its Bochnak's complexification then for every multilinear form $L \in \mathcal L (^k X)$, it holds that $\Vert L \Vert = \Vert \widetilde L \Vert,  $ where $\widetilde L \in \mathcal L (^k \widetilde X)$ stands for the complexified form. Given a polynomial $P \in \mathcal P (^k  X)$, we will also refer to $\widetilde P \in \mathcal P (^k \widetilde X)$ to its complexified mapping.

For each $k$ we denote by $\bbb(k,X)$ the best constant $C>0$ satisfying
$$
\|\widetilde P\|\le C \|P\|, \quad \forall P\in\mathcal P(^kX)
$$ 
and we call the \emph{Bochnak constant} of $X$ as $$\bbb(X) = \limsup_{k \to \infty} \bbb(k,X)^{1/k}.$$

We begin by bounding the polarization constant of a 2-dimensional real $\ell_1$-space:

\begin{prop}
$\sqrt[4]{2}\le \ccc(\ell_1^2(\mathbb R))\le\sqrt 2$.
\end{prop}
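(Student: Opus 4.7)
The two inequalities call for distinct approaches.

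For the upper bound $\ccc(\ell_1^2(\zR)) \leq \sqrt{2}$, the plan is to route through the Bochnak complexification in order to reduce to the complex case of Theorem \ref{teo fin dim}. Given $P \in \mathcal{P}(^k \ell_1^2(\zR))$ and writing $Y = \widetilde{\ell_1^2(\zR)}$ for the Bochnak complexification, the isometry of complexified multilinear forms yields $\Vert\overset{\vee} P\Vert = \Vert\overset{\vee}{\widetilde P}\Vert$. Applying the polarization inequality on the complex space $Y$ and the definition of $\bbb(k,\cdot)$ produces the key chain
\begin{equation*}
\ccc(k, \ell_1^2(\zR)) \leq \ccc(k, Y) \, \bbb(k, \ell_1^2(\zR)).
\end{equation*}
Since $Y$ is a two-dimensional complex Banach space, Theorem \ref{teo fin dim} gives $\ccc(k, Y)^{1/k} \to 1$, so the problem reduces to proving $\bbb(\ell_1^2(\zR)) \leq \sqrt{2}$. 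For this I would analyze the Bochnak norm on $\zR^2 \oplus i \zR^2$ explicitly: writing $z_j = r_j e^{i\phi_j}$, a direct computation gives
\begin{equation*}
\sqrt{r_1^2 + r_2^2} \leq \Vert(z_1, z_2)\Vert_B \leq r_1 + r_2,
\end{equation*}
with the lower bound attained when $\phi_1 - \phi_2 = \pi/2$. This yields the $\ell_1^2$-specific estimate $\Vert z\Vert + \Vert w\Vert \leq \sqrt{2}\,\Vert z + iw\Vert_B$ for $z, w \in \ell_1^2(\zR)$, which inserted into the identity $\widetilde P(z + iw) = \sum_j \binom{k}{j} i^j \overset{\vee} P(z^{k-j} w^j)$ gives $\Vert\widetilde P\Vert_B \leq (\sqrt{2})^k \Vert\overset{\vee} P\Vert$. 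Combining with Theorem \ref{teo fin dim} to control the polarization factor produces $\bbb(\ell_1^2(\zR)) \leq \sqrt{2}$.

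For the lower bound $\ccc(\ell_1^2(\zR)) \geq \sqrt[4]{2}$, the baseline is $P(x, y) = 2xy$: its polar form $\overset{\vee} P((x_1, y_1),(x_2, y_2)) = x_1 y_2 + x_2 y_1$ has norm $1$ (attained at $(1, 0)$ and $(0, 1)$) while $\Vert P\Vert_{\ell_1^2(\zR)} = 1/2$, so $\ccc(2, \ell_1^2(\zR)) = 2 = (\sqrt{2})^2$. To produce the limsup bound, I would construct, for a suitable subsequence of $k$'s, an explicit polynomial $P_k$ with polarization ratio $\geq 2^{k/4}$. Natural candidates include $P_k(x, y) = xy(x - y)^{(k-2)/2}(x + y)^{(k-2)/2}$ for even $k$, which at $k = 4$ already gives ratio $\frac{3\sqrt 3}{2} \approx 2.6$, above the threshold $2^{k/4} = 2$. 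The verification factors through the parametrization $(x, y) = (\epsilon_1 s, \epsilon_2(1 - s))$ with $s \in [0, 1]$, $\epsilon_i \in \{\pm 1\}$ (to compute $\Vert P_k\Vert$) and suitable evaluations of $\overset{\vee} P_k$ at canonical basis vectors with controlled sign patterns (to exhibit the required lower bound).

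The main obstacle is to show the ratio remains of order $2^{k/4}$ along an infinite subsequence: for the family above, a preliminary calculation at $k = 6$ gives ratio $8/3$, just short of $2^{3/2}$, suggesting that a more intricate family --- for example $(2xy)^a (x^2 - y^2)^b$ with $a$, $b$ calibrated as functions of $k$ --- may be needed. Alternatively, one may try to bootstrap from the Bochnak lower bound $\bbb(\ell_1^2(\zR)) \geq \sqrt{2}$, which follows by iterating $P(x, y) = 2xy$: the critical point $(1/\sqrt 2, i/\sqrt 2)$ of Bochnak norm $1$ gives $|\widetilde{P^n}| = 1$, hence $\bbb(2n, \ell_1^2(\zR)) \geq 2^n$; combined with a sharpening of the elementary inequality $\bbb(k, X) \leq (\sqrt 2)^k \ccc(k, X)$ derived above, this would deliver the desired $\ccc(\ell_1^2(\zR)) \geq \sqrt[4]{2}$.
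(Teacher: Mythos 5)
Both halves of your proposal have genuine gaps. For the upper bound, the reduction $\ccc(k,\ell_1^2(\zR))\le \ccc(k,Y)\,\bbb(k,\ell_1^2(\zR))$ together with Theorem \ref{teo fin dim} is fine, but your proof that $\bbb(\ell_1^2(\zR))\le\sqrt2$ is circular. The estimate you derive, $\Vert\widetilde P\Vert\le(\sqrt2)^k\Vert\overset{\vee}{P}\Vert$, bounds the complexified polynomial by the \emph{multilinear} norm; to turn it into a bound by $\Vert P\Vert$ you need exactly the real polarization constant of $\ell_1^2(\zR)$ that you are trying to estimate (and that constant is not asymptotically $1$ --- that is the content of the lower bound). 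Theorem \ref{teo fin dim} cannot ``control the polarization factor'' here: it only gives $\Vert\overset{\vee}{P}\Vert=\Vert\overset{\vee}{\widetilde P}\Vert\le(1+\varepsilon)^k\Vert\widetilde P\Vert$, and feeding that back yields the tautology $\Vert\widetilde P\Vert\le(\sqrt2)^k(1+\varepsilon)^k\Vert\widetilde P\Vert$; formally, your two inequalities $\ccc(k)\le\ccc(k,Y)\bbb(k)$ and $\bbb(k)\le(\sqrt2)^k\ccc(k)$ combine to nothing. Note also that your description of the Bochnak norm is wrong: the Bochnak complexification of $\ell_1^2(\zR)$ is $\ell_1^2(\zC)$ (the paper uses this), so $\Vert(z_1,z_2)\Vert_B=r_1+r_2$ exactly; it never drops to $\sqrt{r_1^2+r_2^2}$. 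The paper's upper bound is instead a one-line Banach--Mazur argument: $d(\ell_1^2(\zR),\ell_2^2(\zR))=\sqrt2$ and Banach's theorem gives $\ccc(k,\ell_2^2(\zR))=1$ for every $k$, hence $\ccc(k,\ell_1^2(\zR))\le(\sqrt2)^k$.

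For the lower bound, the $k=2$ computation says nothing about the $\limsup$, your family $xy(x-y)^{(k-2)/2}(x+y)^{(k-2)/2}$ is, as you yourself note, inconclusive already at $k=6$, and the bootstrap through $\bbb$ rests on a concrete miscalculation: since the Bochnak complexification is $\ell_1^2(\zC)$, the point $(1/\sqrt2,i/\sqrt2)$ has norm $\sqrt2$, not $1$; on the true unit ball the maximum of $\vert 2z_1z_2\vert^n$ equals $2^{-n}=\Vert(2xy)^n\Vert$, so $(2xy)^n$ gains nothing under complexification and your claim $\bbb(2n,\ell_1^2(\zR))\ge2^n$ is unsupported. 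Moreover, even with a valid lower bound on $\bbb$, the correct transfer is $\ccc(k,X)\ge\bbb(k,X)$ up to $\varepsilon$ (because $\Vert\overset{\vee}{P}\Vert=\Vert\overset{\vee}{\widetilde P}\Vert\ge\Vert\widetilde P\Vert$), not a ``sharpening'' of $\bbb(k)\le(\sqrt2)^k\ccc(k)$, which loses precisely the factor you need. What the paper does is exhibit, for each $m$, the degree-$8m$ polynomial $P(x,y)=(xy)^{2m}\operatorname{Re}(x+iy)^{4m}$, whose real sup norm is $2^{-6m}$, while its complexification evaluated at the genuine unit vector $(\tfrac12,\tfrac i2)$ has modulus $2^{-4m-1}$; hence $\ccc(8m,\ell_1^2(\zR))\ge\Vert\widetilde P\Vert/\Vert P\Vert\ge2^{2m-1}$ and $(2^{2m-1})^{1/8m}\to\sqrt[4]2$. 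The essential feature, which your candidates lack, is a factor that is small on real vectors but whose complexification becomes large at a complex unit vector of $\ell_1^2(\zC)$.
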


\begin{proof}If $d(X,Y)$ is the Banach-Mazur distance between two isomorphic Banach spaces $X$ and $Y$, then
$$\ccc(k,X)\leq \ccc(k,Y) (d(X,Y))^k$$
The proof of this result is similar to the proof of \cite[Lemma 12]{benitez1998lower}. Since the Banach-Mazur distance between $\ell_1^2(\mathbb R)$ and $\ell_2^2(\mathbb R)$ is $\sqrt{2}$ and $\ccc(\ell_2^2(\mathbb R))=1$ we get $\ccc(\ell_1^2(\mathbb R))\le\sqrt 2$. 

For the lower bound we see that $\ccc(8m,\ell_1^2(\mathbb R)) \geq 2^{2m-1}$ for every $m \geq 1.$
Let $P \in \mathcal P(^{8m}\ell_1^2(\mathbb R))$ given by 
\begin{align}
    P(x,y)  & = (xy)^{2m} \sum_{j=0}^{2m} \binom{4m}{2j} (-1)^j y^{2j} x^{4m-2j} \label{expresion P}  \\
    &= (xy)^{2m} \frac{(x+iy)^{4m}+(x-iy)^{4m}}{2}  \nonumber\\
       & = (xy)^{2m} \operatorname{Re}(x+iy)^{4m}. \label{expresion P2}
\end{align}
Then it is standard to see that for a unit vector $(x,y)$ in $\ell_1^2(\mathbb R)$,
\begin{align*}
    \vert P(x,y) \vert & \leq  \vert xy \vert^{2m} (x^2+y^2)^{2m}
    \leq \frac{1}{{2}^{6m}}.
\end{align*}
Also, since $\vert P(\frac{1}{2},\frac{1}{2}) \vert =\frac{1}{{2}^{6m}}$ we have that $\Vert P \Vert =\frac{1}{{2}^{6m}}. $

Now using that Bochnak's complexification of $\ell_1^2(\mathbb R)$ is $\ell_1^2(\mathbb C)$, we get that the complexified polynomial $\widetilde P \in \mathcal P(^{8m}\ell_1^2(\mathbb C))$ has the same expression as in \eqref{expresion P} (notice that the alternative expression given in \eqref{expresion P2} is not valid for  the complexified polynomial).
Also, since $ \left| \widetilde P \left(\frac{1}{2},\frac{i}{2} \right) \right|=\frac{1}{2^{4m+1}},$
then
$$
\| \widetilde P \| \geq 2^{2m-1}\left\| P \right\|.
$$

Therefore 
$$
\Vert\overset{\vee}{P} \Vert = \Vert \widetilde{\overset{\vee}{P}} \Vert = \Vert \overset{\vee}{\widetilde P} \Vert \geq \Vert \widetilde P \Vert \geq 2^{2m-1} \Vert P \Vert,
$$ 
and so $\ccc(8m,\ell_1^2(\mathbb R)) \geq 2^{2m-1}$.
\end{proof}

The same example as in the proposition works for any finite dimensional $\ell_1$-space (just considering the first two coordinates). Hence, for every dimension $d$,
$$
\ccc(\ell_1^d(\mathbb R))\ge \sqrt[4]{2}.
$$

Now we prove that, for a real finite dimensional space $X$, the polarization and Bochnak constants concide and they are smaller than 2.

\begin{prop}
For any finite dimensional real normed space $X$ it holds
$$
\ccc(X)= \bbb(X)\le 2 .
$$

\end{prop}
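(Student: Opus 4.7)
The argument splits into two independent parts: proving the equality $\ccc(X)=\bbb(X)$, and proving the upper bound $\bbb(X)\le 2$.

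For the equality, I would play off the contrasting behavior of multilinear forms versus polynomials under Bochnak complexification. The isometric property $\|\widetilde L\|=\|L\|$ recalled in the excerpt, together with the identity $\overset{\vee}{\widetilde P}=\widetilde{\overset{\vee}{P}}$ (uniqueness of the polarization formula applied to $\widetilde P$), gives for every $P\in\mathcal P(^kX)$
\begin{equation*}
\|\widetilde P\|\le \|\overset{\vee}{\widetilde P}\|=\|\widetilde{\overset{\vee}{P}}\|=\|\overset{\vee}{P}\|\le \ccc(k,X)\,\|P\|,
\end{equation*}
so $\bbb(k,X)\le \ccc(k,X)$ and hence $\bbb(X)\le \ccc(X)$. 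The reverse chain
\begin{equation*}
\|\overset{\vee}{P}\|=\|\overset{\vee}{\widetilde P}\|\le \ccc(k,\widetilde X)\,\|\widetilde P\|\le \ccc(k,\widetilde X)\,\bbb(k,X)\,\|P\|,
\end{equation*}
together with Theorem \ref{teo fin dim} applied to the finite dimensional complex space $\widetilde X$ (which yields $\ccc(\widetilde X)=1$), gives $\ccc(X)\le \bbb(X)$ after taking $\limsup$ of $k$-th roots. Combining both inequalities yields $\ccc(X)=\bbb(X)$.

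For the bound $\bbb(X)\le 2$, the plan is to use the explicit description of the Bochnak norm,
\begin{equation*}
\|x+iy\|_B=\sup_{\theta\in[0,2\pi]}\|x\cos\theta+y\sin\theta\|_X,
\end{equation*}
together with a Fourier-coefficient identity for the complexified polynomial. Analytic continuation of $P(x+ty)=\sum_{j=0}^k\binom{k}{j}t^j\overset{\vee}{P}(x^{k-j},y^j)$ to complex $t$ gives $\widetilde P(x+ay)=\sum_{j=0}^k\binom{k}{j}a^j\overset{\vee}{P}(x^{k-j},y^j)$ for every $a\in\mathbb C$. For real $\theta$, the function $\theta\mapsto P(x\cos\theta+y\sin\theta)$ is a trigonometric polynomial of degree $k$; expanding each $\cos^{k-j}\theta\sin^j\theta$ into complex exponentials and extracting the coefficient of $e^{ik\theta}$ identifies
\begin{equation*}
\widetilde P(x-iy)=\frac{2^k}{2\pi}\int_0^{2\pi} e^{-ik\theta}\,P(x\cos\theta+y\sin\theta)\,d\theta.
\end{equation*}
Estimating the integrand by $\|P\|\,\|x\cos\theta+y\sin\theta\|_X^k\le \|P\|\,\|x+iy\|_B^k$ and using that $\|x+iy\|_B=\|x-iy\|_B$ then yields $\|\widetilde P\|\le 2^k\|P\|$. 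Hence $\bbb(k,X)\le 2^k$ and $\bbb(X)\le 2$.

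The main technical step is the integral identity: in the product $(e^{i\theta}+e^{-i\theta})^{k-j}(e^{i\theta}-e^{-i\theta})^j$ one must verify that the only contribution to the $e^{ik\theta}$ mode comes from a unique choice of indices with coefficient~$1$, so that the resulting $k$-th Fourier coefficient collapses to $2^{-k}\widetilde P(x-iy)$. Once this identity is in hand, the Bochnak norm-preservation for multilinear forms and the supremum description of $\|\cdot\|_B$ are invoked transparently, and no further delicate estimate is needed.
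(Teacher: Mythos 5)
Your argument for the equality $\ccc(X)=\bbb(X)$ is essentially the paper's: the chain $\Vert\overset{\vee}{P}\Vert=\Vert\overset{\vee}{\widetilde P}\Vert\le \ccc(k,\widetilde X)\Vert\widetilde P\Vert\le \ccc(k,\widetilde X)\bbb(k,X)\Vert P\Vert$ together with $\ccc(\widetilde X)=1$ from Theorem \ref{teo fin dim} gives $\ccc(X)\le\bbb(X)$, and your direct inequality $\Vert\widetilde P\Vert\le\Vert\overset{\vee}{\widetilde P}\Vert=\Vert\widetilde{\overset{\vee}{P}}\Vert=\Vert\overset{\vee}{P}\Vert\le\ccc(k,X)\Vert P\Vert$ is just a cleaner ($\varepsilon$-free) version of the paper's choice of a near-extremal $P$ for $\bbb(k,X)$. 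Where you genuinely diverge is the bound $\bbb(X)\le 2$: the paper simply cites \cite[Proposition 20]{munoz1999complexifications}, which gives $\bbb(k,X)\le 2^{k-2}$, while you prove the weaker but equally sufficient estimate $\bbb(k,X)\le 2^k$ from scratch via the Fourier-coefficient identity $\widetilde P(x-iy)=\frac{2^k}{2\pi}\int_0^{2\pi}e^{-ik\theta}P(x\cos\theta+y\sin\theta)\,d\theta$. That identity is correct (the only $e^{ik\theta}$ contribution in $(e^{i\theta}+e^{-i\theta})^{k-j}(e^{i\theta}-e^{-i\theta})^j$ has coefficient $1$, and the normalization $2^{k-j}(2i)^j=2^ki^j$ produces exactly $2^{-k}\sum_j\binom{k}{j}(-i)^j\overset{\vee}{P}(x^{k-j},y^j)=2^{-k}\widetilde P(x-iy)$), and your argument has the merit of being self-contained and independent of the dimension of $X$; the cost is a coarser constant per degree, which is irrelevant after taking $k$-th roots.

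One inaccuracy should be fixed, though it does not break the proof: the formula $\sup_{\theta}\Vert x\cos\theta+y\sin\theta\Vert_X$ is the \emph{Taylor} (minimal) complexification norm, not the Bochnak norm. These differ in general: the Bochnak complexification of $\ell_1^2(\mathbb R)$ is $\ell_1^2(\mathbb C)$ (as used in the paper's example), where $\Vert(1,i)\Vert=2$, whereas the supremum formula gives $\sqrt 2$. Your estimate survives because you only need the inequality $\Vert x\cos\theta+y\sin\theta\Vert_X\le\Vert x+iy\Vert_B$ (the Taylor norm is the smallest natural complexification norm, or directly: conjugation is isometric for $\Vert\cdot\Vert_B$, hence the real-part projection is contractive, and multiplication by $e^{i\theta}$ is isometric) together with $\Vert x+iy\Vert_B=\Vert x-iy\Vert_B$; both hold for the Bochnak norm. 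So state these two properties instead of the erroneous equality and the argument is complete.
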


\begin{proof}
By \cite[Proposition 18]{munoz1999complexifications}, we know that $\bbb(k,X) \leq 2^{k-1}$ and therefore $\bbb(X) \leq 2.$
Let us show that $\ccc(X)= \bbb(X)$. For every polynomial $P \in \mathcal P(^k X),$
$$
\Vert\overset{\vee}{P} \Vert = \Vert \widetilde{\overset{\vee}{P}} \Vert = \Vert \overset{\vee}{\widetilde P} \Vert  \leq \ccc(k,\widetilde X) \Vert \widetilde P \Vert \leq \ccc(k,\widetilde X) \bbb(k, X)\Vert P \Vert.
$$ 
This implies that
$$
\ccc(k,X) \leq \ccc(k,\widetilde X) \bbb(k, X),
$$
thus, 
$$
\ccc(X) \leq \ccc(\widetilde X) \bbb(X).
$$
Since $\tilde X$ is a finite dimensional complex space, by Theorem \ref{teo fin dim} we know that $\ccc(\widetilde X)=1$ and therefore $\ccc(X) \leq \bbb(X).$

On the other hand, for each  $P \in \mathcal P(^kX)$, 
$$
\Vert \widetilde P \Vert \le  \Vert \overset{\vee}{\widetilde P} \Vert = \Vert \widetilde{\overset{\vee}{P}} \Vert =\Vert\overset{\vee}{P} \Vert \leq  \ccc(k,X) \Vert P \Vert.
$$ 
Then $$\bbb(k,X)\le\ccc(k,X),\ \forall k\in\mathbb N$$
and the result follows.
\end{proof}

%Consider the finite dimensional real space $\ell_\infty^4$. For $k\in \zN$, define a $4k$-homogeneous polynomial $P$ as in \cite[Example 3]{munoz1999complexifications} by
%$$P(x)= [(x_1^2-x_2^2)^2-(x_3^2-x_4^2)^2]^k.$$

%Let $X$ be the Bochnak complexification of $\ell_\infty^4$. If $Q$ y $\overset{\vee}{Q}$ are the complex extensions of $P$ and $\overset{\vee}{P}$ to $X$, then we have
%$$ \Vert P \Vert =1 \mbox{ y } \Vert  Q \Vert \geq \left( %\frac{5}{4}\right)^k$$
%On the other hand, since we are considering the Bochnak complexification
%$$\Vert \overset{\vee}{P} \Vert = \Vert \overset{\vee}{Q} \Vert.$$ Then we have
%\begin{eqnarray*}
%\ccc(4k, \ell_\infty^4) &=& \ccc(4k, \ell_\infty^4) \Vert P \Vert \geq \Vert \overset{\vee}{P}\Vert \\
% &=& \Vert  \overset{\vee}{Q}\Vert \geq \Vert Q \Vert \geq \left( %\frac{5}{4}\right)^k \
%\end{eqnarray*}

%Therefore $\ccc(\ell_\infty^4) \geq \left( \frac{5}{4}\right)^\frac{1}{4}>1.$ As an immediate consequence, if $d$ is large enough, by Lemma \ref{lem cociente l1}, we have $\ccc(\ell_1^d)>1$.
%%%%%%%%%%%%%%%%%%%%%%%%%%%%%%%%%%%%%%%%%%%%%%%%%%%%%%%%%%%%%%%%%%%%%%%%%%
%%%%%%%%%%%%%%%%%%%%%%%%%%%%%%%%%%%%%%%%%%%%%%%%%%%%%%%%%%%%%%%%%%%%%%%%%%
%%%%%%%%%%%%%%%%%%%%%%%%%%%%%%%%%%%%%%%%%%%%%%%%%%%%%%%%%%%%%%%%%%%%%%%%%%

%%%%%%%%%%%%%%%%%%%%%%%%%%%%%%%%%%%%%%%%%%%%%%%%%%%%%%%%%%%%%%%%%%%%%%%%%%
%%%%%%%%%%%%%%%%%%%%%%%%%%%	%%%%%%%%%%%%%%%%%%%%%%%%%%%%%%%%%%%%%%%%%%%%%%%
%%%%%%%%%%%%%%%%%%%%%%%%%%%%%%%%%%%%%%%%%%%%%%%%%%%%%%%%%%%%%%%%%%%%%%%%%%

\section{Type and Cotype and the symmetric operator norm property}\label{Type and Cotype and the symmetric operator norm property}

It is standard that for a Hilbert space $\mathcal{H}$ and a self-adjoint operator $T \in \mathcal L (\mathcal{H})$ we have the equality
\begin{equation}\label{propiedad hilbert}
    \Vert T \Vert= \sup_{\Vert \mathbf{x} \Vert = 1} \vert \langle T\mathbf{x}, \mathbf{x} \rangle \vert.
\end{equation}
The notion of self-adjoint operator in a Hilbert space can be extended to operators from an arbitrary Banach space $X$ to its dual $X^*$. Namely, $T \in \mathcal L(X,X^*)$ is \emph{symmetric} if $T(\mathbf{x})(\mathbf{y})=T(\mathbf{y})(\mathbf{x})$, for all $\mathbf{x},\mathbf{y} \in X$.
We say that a Banach space $X$ has the \emph{symmetric operator norm property} if for every symmetric $T \in \mathcal L(X,X^*)$,
\begin{equation}\label{symmetric operator norm property}
    \Vert T \Vert= \sup_{\Vert \mathbf{x} \Vert = 1} \vert T(\mathbf{x})(\mathbf{x}) \vert.
\end{equation}

Note that the symmetric operator norm property for $X$ can be restated as $\ccc(2,X)=1.$
Indeed, each bilinear symmetric  form $B$ in $X$ can be isometrically identified with a symmetric linear operator $T_B \in \mathcal L(X,X^*)$ by $T_B(\mathbf{x})(\mathbf{y})=B(\mathbf{x},\mathbf{y})$, for every $\mathbf{x},\mathbf{y} \in X.$ Then Equation \eqref{symmetric operator norm property} says that the norm of $B$ coincides with the norm of its associated 2-homogeneous polynomial.

For real spaces, having the symmetric operator norm property is equivalent to being a Hilbert space \cite[Proposition 2.8]{benitez1993characterization}. In the complex case, this is no longer true: in \cite[Proposition 3]{S2} it is shown that if $\mathcal{H}$ is a Hilbert space then $\mathcal{H} \oplus_{\infty} \mathbb C$ also enjoys the symmetric operator norm property. Here we provide necessary conditions related to the notion of type and cotype,  for a complex Banach space to have this property. For an introduction on the concepts of type and cotype we refer to Maurey's survey \cite{maurey2003type}.
 Given a Banach space $X$, we denote by $\ppp(X)$ and $\qqq(X)$ the constants
$$\ppp(X)=\sup\{r: X \mbox{ has type } r\}$$
$$\qqq(X)=\inf\{r: X \mbox{ has cotype } r\}.$$

Recall that a classical result of Kwapie\'n \cite{kwapien1972isomorphic} states that a Banach space has type $2$ and cotype $2$ if and only if it is a Hilbert space. 
The following result shows that if $\ccc(2,X)=1$ then $X$ is, in some sense, similar to a  Hilbert space. 

\begin{thm}\label{teo tipo y cotipo} Let $X$ be an infinite dimensional complex Banach space with the symmetric operator norm property. Then, $$\ppp(X)=\qqq(X)=2.$$
\end{thm}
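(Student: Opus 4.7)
The plan is to prove the contrapositive via the Maurey--Pisier theorem. By Maurey--Pisier, if $\ppp(X)<2$ or $\qqq(X)>2$, there is some $p\ne 2$ for which $\ell_p$ is finitely representable in $X$. Starting from this, I will produce a 2-homogeneous polynomial $P\in\mathcal{P}(^2 X)$ with $\|\overset{\vee}{P}\|>\|P\|$, contradicting the hypothesis $\ccc(2,X)=1$.

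The first step is to exhibit a uniform witness on $\ell_p$. For $1\le p<2$, the monomial $P_0(z_1,z_2)=z_1 z_2$ on $\ell_p^2(\mathbb{C})$ has $\|P_0\|=2^{-2/p}$ by Lagrange multipliers under the constraint $|z_1|^p+|z_2|^p=1$, while $\|\overset{\vee}{P_0}\|\ge\overset{\vee}{P_0}(e_1,e_2)=\tfrac12$; hence $\ccc(2,\ell_p^2)\ge 2^{2/p-1}>1$. For $2<p\le\infty$, one uses that $\ccc(2,\ell_\infty^n)$ is bounded below by a constant strictly greater than $1$ (a symmetric-matrix version of the complex Grothendieck constant) and interpolates between the two endpoints.

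The second step is to transport the witness into an honest polynomial on $X$. Fix $\varepsilon>0$ and, by finite representability, pick $v_1,v_2\in X$ spanning a subspace $Y\subset X$ that is $(1+\varepsilon)$-isomorphic to $\ell_p^2$. Let $\widetilde\varphi_1,\widetilde\varphi_2\in X^*$ be Hahn--Banach extensions of the biorthogonal functionals to $v_1,v_2$ on $Y$, so that $\|\widetilde\varphi_i\|\le 1+\varepsilon$ and $\widetilde\varphi_i(v_j)=\delta_{ij}$. Consider
\begin{equation*}
P(x)=\widetilde\varphi_1(x)\widetilde\varphi_2(x)\in\mathcal{P}(^2 X).
\end{equation*}
Evaluating $\overset{\vee}{P}$ on the normalized pair $v_1/\|v_1\|,v_2/\|v_2\|$ yields $\|\overset{\vee}{P}\|_X\ge(2(1+\varepsilon)^2)^{-1}$. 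To bound $\|P\|_X$, extend the pair $(\widetilde\varphi_1,\widetilde\varphi_2)$ \emph{simultaneously} to a linear map $\Phi\colon X\to\ell_p^2$ of norm at most $\lambda(\ell_p^2)(1+\varepsilon)$, with $\lambda(\ell_p^2)$ the injectivity constant of $\ell_p^2$. The elementary Lagrange inequality $|z_1 z_2|\le 2^{-2/p}\|z\|_{\ell_p^2}^2$ then gives $\|P\|_X\le 2^{-2/p}\lambda(\ell_p^2)^2(1+\varepsilon)^2$, and consequently
\begin{equation*}
\ccc(2,X)\ge\frac{2^{2/p-1}}{\lambda(\ell_p^2)^2(1+\varepsilon)^4}.
\end{equation*}

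The hard part is the regime where $p$ is close to $2$, since the exponential gain $2^{2/p-1}$ shrinks to $1$ while the injectivity constant $\lambda(\ell_p^2)$ remains bounded away from $1$ (already $\lambda(\ell_2^2)=2/\sqrt{\pi}$), so a two-dimensional witness alone is insufficient. I would overcome this by passing to an ultrapower $X^{\mathcal U}$ in which $\ell_p$ embeds isometrically, together with the observation that $\ccc(2,\cdot)$ is an ultrapower invariant (it is a supremum over finite-dimensional configurations of vectors), so that $\ccc(2,X)=\ccc(2,X^{\mathcal U})$. Working on the isometric copy $\ell_p\subset X^{\mathcal U}$ removes the $(1+\varepsilon)$-losses entirely; combining this with higher-dimensional witnesses on $\ell_p^n$ as $n\to\infty$, together with a refined simultaneous extension that exploits the isometric embedding to sidestep the $\lambda$-inflation, is the technical heart of the proof. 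The dual case $p>2$ is handled by a quotient-monotonicity argument for $\ccc(2,\cdot)$ applied to $X^*$ in conjunction with the invariant $\ccc(2,X)=\ccc(2,X^{**})$ that follows from local reflexivity.
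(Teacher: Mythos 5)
Your witnesses are in the right spirit: the monomial $z_1z_2$ gives $\ccc(2,\ell_p^2)\ge 2^{2/p-1}>1$ for $p<2$ (this is essentially \cite[Theorem 2]{S1}), and for $p>2$ the paper likewise lower-bounds $\ccc(2,\ell_p^3)$ via a Varopoulos-type polynomial on $\ell_\infty^3$ and interpolation (Lemma \ref{lema 2}), where one must be careful to interpolate \emph{polynomial} norms without losing a polarization factor --- this is exactly why the paper proves Proposition \ref{prop interpolacion}, a point your sketch glosses over, and why dimension $3$ is needed, since $\ccc(2,\ell_\infty^2)=1$. The genuine gap, however, is the transfer step from $\ell_p$ to $X$. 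Maurey--Pisier gives you $\ell_p$ finitely representable \emph{in} $X$, i.e.\ as almost isometric subspaces, but $\ccc(2,\cdot)$ is not monotone under subspaces: a $2$-homogeneous witness on a subspace $Y$ (even a product of two functionals) must be extended to $X$, and while Hahn--Banach controls $\Vert\widetilde\varphi_i\Vert$, it gives no control of $\sup_{B_X}\vert\widetilde\varphi_1\widetilde\varphi_2\vert$, which can be close to $1$ even though on $B_Y$ it equals $2^{-2/p}$. Your fix through a simultaneous extension $\Phi:X\to\ell_p^2$ costs the injectivity constant $\lambda(\ell_p^2)>1$, and your own bound $\ccc(2,X)\ge 2^{2/p-1}\lambda(\ell_p^2)^{-2}(1+\varepsilon)^{-4}$ degenerates precisely in the regime $p$ near $2$, which cannot be avoided since the type/cotype exponents of $X$ may be arbitrarily close to $2$. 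The ultrapower device does not repair this: it removes only the $(1+\varepsilon)$ losses, not the extension loss (no isometric simultaneous extension onto a copy of $\ell_p^2$ exists for $p\neq\infty$), and the asserted invariance $\ccc(2,X)=\ccc(2,X^{\mathcal U})$ is itself unjustified for the same reason --- the constant is not a supremum over finite configurations of vectors, because restricting a polynomial on $X^{\mathcal U}$ to a finite-dimensional subspace recreates the very subspace-extension problem you are trying to bypass. The ``refined simultaneous extension sidestepping the $\lambda$-inflation,'' which you yourself identify as the technical heart, is exactly the missing ingredient, and there is no reason to expect it exists.

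The paper circumvents the obstruction by dualizing \emph{before} transferring: by \cite[Proposition 2.3]{bayart2010weak}, for every $\varepsilon>0$ and $d$ the space $X$ admits quotients $(1+\varepsilon)$-isomorphic to $\ell_{\ppp(X^*)'}^d$ and to $\ell_{\qqq(X^*)'}^d$, and $\ccc(2,\cdot)$ \emph{is} monotone under quotients (\cite[Lemma 0]{S1}: composing with the quotient map preserves both the polynomial norm and the bilinear norm). This yields $\ppp(X^*)=\qqq(X^*)=2$, and only then does one return to $X$ by duality, using $\ppp(X^*)>1$ together with $\ppp(X)=\ppp(X^{**})=\qqq(X^*)'$ and $\qqq(X)\le\qqq(X^{**})\le 2$. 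Your closing remark about a quotient argument ``applied to $X^*$'' for $p>2$ gestures toward this, but as stated it does not work: cotype of $X$ alone gives no type information on $X^*$, so the argument must be run entirely at the level of $X^*$ (for both exponents) and transferred back at the end. As written, the proposal does not constitute a proof.
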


%In particular, if \emph{any} polarization constant of a complex Banach space is equal to one, then $\ppp(X)=\qqq(X)=2$.
It should be noted that the conclusion of Theorem \ref{teo tipo y cotipo} (i.e., $\ppp(X)=\qqq(X)=2$) holds trivially for every finite dimensional normed space. This is why the statement is just given for the infinite dimensional case.

To prove the theorem, we need to show that for each $p \neq 2$, there exists a dimension $d=d(p)$ such that $\ccc(2, \ell_p^d(\mathbb C)) >1.$ For $p < 2$ this was proved by Sarantopoulos:

\begin{lem}{ \cite[Theorem 2]{S1}.} For $1\leq p <2$ we have that
$$\ccc(2,\ell_p^2(\mathbb C)) >1.$$
\end{lem}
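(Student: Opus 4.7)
The plan is to exhibit a single concrete $2$-homogeneous polynomial on $\ell_p^2$ whose polarization has strictly larger norm, for every $1\le p<2$. The obvious candidate is
$$
P(x,y) = xy,
$$
whose associated symmetric bilinear form is
$$
\overset{\vee}{P}\bigl((x_1,y_1),(x_2,y_2)\bigr) = \tfrac{1}{2}(x_1 y_2 + x_2 y_1).
$$

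First I would compute $\Vert P\Vert$ on $\ell_p^2$. Since $|P(x,y)|=|x||y|$, maximizing on the unit sphere $|x|^p+|y|^p=1$ is a one-variable problem; by AM--GM (or by Lagrange multipliers) the supremum is attained at $|x|=|y|=2^{-1/p}$, giving
$$
\Vert P\Vert_{\mathcal P(^2\ell_p^2)} = 2^{-2/p}.
$$
Next I would bound $\Vert \overset{\vee}{P}\Vert$ from below simply by evaluating at the canonical basis: plugging in $\mathbf{x}_1=(1,0)$ and $\mathbf{x}_2=(0,1)$ (both unit vectors in $\ell_p^2$) gives
$$
\Vert \overset{\vee}{P}\Vert \;\ge\; \bigl|\overset{\vee}{P}\bigl((1,0),(0,1)\bigr)\bigr| \;=\; \tfrac{1}{2}.
$$

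Combining the two estimates,
$$
\ccc(2,\ell_p^2) \;\ge\; \frac{\Vert \overset{\vee}{P}\Vert}{\Vert P\Vert} \;\ge\; \frac{1/2}{2^{-2/p}} \;=\; 2^{\frac{2}{p}-1},
$$
and the exponent $\tfrac{2}{p}-1$ is strictly positive precisely when $p<2$, yielding $\ccc(2,\ell_p^2)>1$. (Note that at $p=2$ the bound degenerates to $1$, in agreement with Banach's theorem for Hilbert spaces.)

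There is essentially no obstacle: the argument is entirely elementary, and the only thing to double-check is that the evaluation $\overset{\vee}{P}((1,0),(0,1))=\tfrac12$ really certifies the lower bound, which is immediate from the symmetrization formula. One could, if desired, verify that $\Vert \overset{\vee}{P}\Vert = \tfrac12$ exactly (so that the inequality above is in fact sharp for this particular $P$), but this is not needed for the statement.
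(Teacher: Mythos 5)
Your argument is correct: $\Vert P\Vert_{\mathcal P(^2\ell_p^2)}=2^{-2/p}$ for $P(x,y)=xy$ follows from AM--GM on the sphere, the evaluation $\overset{\vee}{P}(\mathbf{e}_1,\mathbf{e}_2)=\tfrac12$ at the unit basis vectors gives $\Vert\overset{\vee}{P}\Vert\ge\tfrac12$, and since $\ccc(2,\ell_p^2)\ge \Vert\overset{\vee}{P}\Vert/\Vert P\Vert$ for any single polynomial, you get $\ccc(2,\ell_p^2)\ge 2^{\frac2p-1}>1$ exactly when $p<2$. Note, however, that the paper does not prove this lemma at all: it is quoted from Sarantopoulos \cite[Theorem 2]{S1}, whose result gives the precise polarization constants on $L_p$-spaces in this range, obtained by a genuinely harder analysis. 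Your route is more elementary and entirely self-contained, producing only a lower bound --- but that is all the lemma requires, and your bound is even sharp at $p=1$, where it gives $2$, in agreement with formula \eqref{Prop Sarant finite dim} for $\ccc(2,\ell_1^2)$. One further point worth making explicit: the lemma is applied in the paper in the complex setting (Theorem \ref{teo tipo y cotipo}), and your computation is insensitive to the scalar field, since $|P(x,y)|=|x||y|$ and the test vectors are real, so the argument covers the complex case as needed.
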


To obtain $\ccc(2, \ell_p^3(\mathbb C)) >1$ for $p >2$ we need an interpolation result.
Polynomials can easily be interpolated by means of multilinear forms' interpolation \cite[Theorem 4.4.1]{bergh2012interpolation} (at the cost of the polarization constant). Since we need to avoid polarization constants we prove the following proposition which could be interesting in its own right.
We refer to \cite{bergh2012interpolation} for an introduction and the notation we use on complex interpolation of Banach spaces.

\begin{prop}\label{prop interpolacion} Let $\overline{A}=(A_0, A_1)$ and  $\overline{B}=(B_0, B_1)$ be compatible Banach couples. Let $P:A_i\rightarrow B_i$ be a  $k$-homogeneous polynomial with norm  $\Vert P\Vert_i=M_i$ for $i=0,1$. For $0<\theta <1$, $P$ extends to a unique continuous  $k$-homogeneous polynomial
$$P:[A_0,A_1]_\theta \rightarrow [B_0,B_1]_\theta$$ with norm at most
$$\Vert P \Vert_{\PP([A_0,A_1]_\theta, [B_0,B_1]_\theta)}\leq M_0^{1-\theta}M_1^\theta.$$
\end{prop}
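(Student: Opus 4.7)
The plan is to adapt Calderón's complex interpolation argument for linear operators to the polynomial setting by inserting an explicit scalar rescaling on the strip, which will bypass the need to first go through multilinear interpolation (and hence avoid paying the polarization constant). Recall that $[A_0,A_1]_\theta$ is described as the space of values $f(\theta)$, $f\in\mathcal{F}(\overline{A})$, where $\mathcal{F}(\overline{A})$ is the Calderón space of $(A_0+A_1)$-valued functions on the strip $\overline{S}=\{0\le \operatorname{Re}(z)\le 1\}$ that are bounded and continuous on $\overline{S}$, holomorphic in $S$, and send $\operatorname{Re}(z)=j$ continuously into $A_j$, with norm $\|f\|_{\mathcal{F}(\overline{A})}=\max_{j=0,1}\sup_t\|f(j+it)\|_{A_j}$.

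Given $a\in[A_0,A_1]_\theta$ and $\varepsilon>0$, pick $f\in\mathcal{F}(\overline{A})$ with $f(\theta)=a$ and $\|f\|_{\mathcal{F}(\overline{A})}\le(1+\varepsilon)\|a\|_\theta$. I would define
$$
g(z):=M_0^{\,z-\theta}\,M_1^{\,\theta-z}\,P(f(z)),
$$
using the principal branches of the scalar factors (handling degenerate $M_j=0$ by a separate direct argument). Then $g(\theta)=P(a)$, and on the vertical boundaries
$$
\|g(it)\|_{B_0}\le M_0^{-\theta}M_1^{\theta}\cdot M_0\|f(it)\|_{A_0}^k\le M_0^{1-\theta}M_1^{\theta}\|f\|_{\mathcal{F}(\overline{A})}^k,
$$
and analogously $\|g(1+it)\|_{B_1}\le M_0^{1-\theta}M_1^{\theta}\|f\|_{\mathcal{F}(\overline{A})}^k$. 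Provided $g\in\mathcal{F}(\overline{B})$, the definition of the interpolation norm gives $\|P(a)\|_{[B_0,B_1]_\theta}\le M_0^{1-\theta}M_1^{\theta}\|f\|_{\mathcal{F}(\overline{A})}^k$, and letting $\varepsilon\to 0$ yields $\|P(a)\|_{[B_0,B_1]_\theta}\le M_0^{1-\theta}M_1^{\theta}\|a\|_{[A_0,A_1]_\theta}^k$. Uniqueness of the extension is automatic from the density of $A_0\cap A_1$ in $[A_0,A_1]_\theta$ and the continuity of $P$.

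The main obstacle, and the only place where one has to be careful, is verifying that $g\in\mathcal{F}(\overline{B})$, and in particular that $P\circ f$ is holomorphic as a map $S\to B_0+B_1$. For this one works with the associated symmetric $k$-linear mapping $\overset{\vee}{P}$ on the ambient space containing $A_0+A_1$, so that $P(f(z))=\overset{\vee}{P}(f(z),\ldots,f(z))$ is a composition of a holomorphic $f$ with a continuous symmetric $k$-linear form; such a composition is holomorphic into $B_0+B_1$ (and bounded on $\overline{S}$ since $f$ is bounded there). Continuity of the boundary traces $t\mapsto P(f(j+it))\in B_j$ follows because $f(j+\cdot)$ is continuous into $A_j$ and $P|_{A_j}\colon A_j\to B_j$ is uniformly continuous on bounded sets. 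The scalar factor $M_0^{z-\theta}M_1^{\theta-z}$ is entire and bounded on $\overline{S}$, so multiplying preserves all of these properties. Once this technical check is done, the scalar rescaling, which is entirely analogous to the one used for linear operators, delivers the geometric mean $M_0^{1-\theta}M_1^{\theta}$ cleanly and without the polarization overhead that a reduction to multilinear interpolation would incur.
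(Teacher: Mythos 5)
Your proof is correct and follows essentially the same route as the paper: both pick an admissible $f\in\mathfrak F(\overline A)$ with $f(\theta)=\mathbf a$, multiply $P\circ f$ by a bounded scalar factor built from $M_0,M_1$ (yours is $M_0^{z-\theta}M_1^{\theta-z}$, the paper's $M_0^{z-1}M_1^{-z}$, which differ only by a constant), and read off the bound $M_0^{1-\theta}M_1^\theta$ from the boundary estimates in $\mathfrak F(\overline B)$. The only cosmetic difference is that you justify uniqueness of the extension by density of $A_0\cap A_1$, while the paper gets the extension by applying the multilinear interpolation theorem to $\overset{\vee}{P}$; both are fine.
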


\begin{proof} The unique extension of $P$ follows by applying  \cite[Theorem 4.4.1]{bergh2012interpolation} to $\overset{\vee}{P}$. We only need to check the upper bound for the norm.

Take $\mathbf{a}\in [A_0,A_1]_\theta$ with $\Vert \mathbf{a}\Vert_{[A_0,A_1]_\theta} <1$, then there is $f\in \mathfrak {F}(\overline{A})$ such that $f(\theta)=\mathbf{a}$ and $$ \max\{ \sup_{t\in \zR}\Vert f(it)\Vert_{A_0}, \sup_{t\in \zR}\Vert f(1+it)\Vert_{A_1}\}=\Vert f\Vert_\mathfrak{F} <1 .$$

Let
$$g:\{z\in\zC: 0\leq \operatorname{Re}(z)\leq 1\} \rightarrow B_0 +B_1$$
defined as  $g(z)=M_0^{z-1}M_1^{-z}P(f(z))$. Since $P$ is a polynomial, we have that $g\in \mathfrak {F}(\overline{B})$. Moreover
$$\Vert g(it) \Vert_{B_0}\leq M_0^{-1} \Vert P(f(it)) \Vert_{B_0} \leq \Vert f(it) \Vert_{A_0}^k\leq \Vert f \Vert_\mathfrak{F}^k <1$$
$$\Vert g(1+it) \Vert_{B_1}\leq M_1^{-1} \Vert P(f(1+it)) \Vert_{B_1} \leq \Vert f(1+it) \Vert_{A_1}^k\leq \Vert f \Vert_\mathfrak{F}^k <1.$$
Therefore $\Vert g \Vert \leq 1$. Thus
$$\Vert M_0^{\theta-1}M_1^{-\theta}P(\mathbf{a})\Vert_{[B_0,B_1]_\theta} = \Vert M_0^{\theta-1}M_1^{-\theta}P(f(\theta))\Vert_{[B_0,B_1]_\theta} = \Vert g(\theta)\Vert_{[B_0,B_1]_\theta} \leq \Vert g \Vert_\mathfrak{F}\leq 1.$$
which completes the proof.
\end{proof}

We use this interpolation result to prove the next auxiliary lemma.

\begin{lem}\label{lema 2} For $2< p \leq \infty$ we have that
$$\ccc(2,\ell_p^3(\mathbb C)) >1.$$
\end{lem}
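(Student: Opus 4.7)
The plan is to argue by contradiction: suppose $\ccc(2,\ell_p^3)=1$ for some $p\in(2,\infty]$, and let $p'=p/(p-1)\in[1,2)$ denote the conjugate exponent.

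First, I would extend Lemma~\ref{lema 2}'s partner (Sarantopoulos's lemma for $p'<2$) to dimension $3$: the space $\ell_{p'}^2$ embeds isometrically as a $1$-complemented subspace of $\ell_{p'}^3$ via the projection $\pi(z_1,z_2,z_3)=(z_1,z_2,0)$. Given any polynomial $Q\in\mathcal P(^2\ell_{p'}^2)$, the pullback $\widetilde Q=Q\circ\pi$ satisfies $\|\widetilde Q\|_{\ell_{p'}^3}=\|Q\|_{\ell_{p'}^2}$ and $\|\overset{\vee}{\widetilde Q}\|_{\ell_{p'}^3}=\|\overset{\vee}{Q}\|_{\ell_{p'}^2}$, because $\pi$ has norm one and restricts to the identity on the subspace. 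Hence $\ccc(2,\ell_{p'}^3)\ge\ccc(2,\ell_{p'}^2)>1$, and there exists a polynomial $P$ on $\ell_{p'}^3$ with $\|\overset{\vee}{P}\|_{\ell_{p'}^3}>\|P\|_{\ell_{p'}^3}$.

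Next, I would exploit the classical complex interpolation identity $\ell_2^3=[\ell_p^3,\ell_{p'}^3]_{1/2}$ (standard for $L^q$-spaces with conjugate exponents). Applying Proposition~\ref{prop interpolacion} to the polynomial $P$ and the classical multilinear Riesz--Thorin theorem to the symmetric bilinear form $\overset{\vee}{P}$ yields respectively
$$
\|P\|_{\ell_2^3}^2\le\|P\|_{\ell_p^3}\,\|P\|_{\ell_{p'}^3},\qquad
\|\overset{\vee}{P}\|_{\ell_2^3}^2\le\|\overset{\vee}{P}\|_{\ell_p^3}\,\|\overset{\vee}{P}\|_{\ell_{p'}^3}.
$$
Since $\ell_2^3$ is a Hilbert space, $\|P\|_{\ell_2^3}=\|\overset{\vee}{P}\|_{\ell_2^3}$, and by the contradiction hypothesis $\|P\|_{\ell_p^3}=\|\overset{\vee}{P}\|_{\ell_p^3}$. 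Combining the two inequalities (the left-hand sides coincide, as does the $\ell_p^3$-factor on the right), one aims to deduce $\|\overset{\vee}{P}\|_{\ell_{p'}^3}\le\|P\|_{\ell_{p'}^3}$ for every $P$, that is $\ccc(2,\ell_{p'}^3)=1$, contradicting the first paragraph.

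The hardest part will be carrying out this final comparison rigorously: the two interpolation inequalities a priori yield only matching \emph{lower} bounds for the two unknowns $\|P\|_{\ell_{p'}^3}$ and $\|\overset{\vee}{P}\|_{\ell_{p'}^3}$, so some extra care is required (likely by applying the argument to a well-chosen class of polynomials for which the polynomial interpolation inequality is tight). It is precisely at this step that Proposition~\ref{prop interpolacion} becomes indispensable: the standard route of interpolating the polynomial norm via its associated multilinear form would introduce a polarization factor $k^k/k!=2$ on the right-hand side of the first inequality, and this extra factor would exactly spoil the cancellation. By interpolating the polynomial norm directly, without paying the polarization toll, the constants line up so that the two interpolation bounds can be matched term by term to force $\ccc(2,\ell_{p'}^3)=1$ and produce the contradiction.
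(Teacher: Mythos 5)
There is a genuine gap, and it sits exactly where you flag it: the final comparison cannot be made, even in principle, along the route you describe. Both interpolation estimates bound the \emph{same} quantity $\Vert P\Vert_{\mathcal P(^2\ell_2^3)}^2=\Vert\overset{\vee}{P}\Vert_{\mathcal L(^2\ell_2^3)}^2$ from above; equivalently, they give \emph{lower} bounds for the two products $\Vert P\Vert_{\ell_p^3}\Vert P\Vert_{\ell_{p'}^3}$ and $\Vert\overset{\vee}{P}\Vert_{\ell_p^3}\Vert\overset{\vee}{P}\Vert_{\ell_{p'}^3}$, and two lower bounds for two different unknowns cannot be played against each other. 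Even your proposed repair — restricting to polynomials for which the polynomial interpolation inequality is an equality — only yields, after inserting $\Vert P\Vert_{\ell_2^3}=\Vert\overset{\vee}{P}\Vert_{\ell_2^3}$ and the contradiction hypothesis $\Vert P\Vert_{\ell_p^3}=\Vert\overset{\vee}{P}\Vert_{\ell_p^3}$, the inequality $\Vert P\Vert_{\ell_{p'}^3}\le\Vert\overset{\vee}{P}\Vert_{\ell_{p'}^3}$, which is trivially true for every polynomial and contradicts nothing. What would actually be needed is \emph{equality} in the multilinear Riesz--Thorin estimate for a witness $P$ with $\Vert\overset{\vee}{P}\Vert_{\ell_{p'}^3}>\Vert P\Vert_{\ell_{p'}^3}$; you neither produce such a witness nor give any reason one should exist, and nothing in Sarantopoulos' $\ell_{p'}^2$ example guarantees it. More structurally, your scheme tries to transfer $\ccc(2,\cdot)=1$ from $\ell_p^3$ to its dual $\ell_{p'}^3$, and there is no general mechanism (interpolation included) that relates the $2$-polarization constant of a space to that of its dual, so the contradiction you aim for is not within reach of these tools.

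For contrast, the paper's proof is direct and uses interpolation only for the kind of one-sided information it can honestly deliver: it takes the explicit polynomial $P(z_1,z_2,z_3)=z_1^2+z_2^2+z_3^2-2z_1z_2-2z_1z_3-2z_2z_3$, quotes Varopoulos' value $\Vert P\Vert_{\mathcal P(^2\ell_\infty^3)}=5$, computes $\Vert P\Vert_{\mathcal P(^2\ell_2^3)}=2$ from the eigenvalues $-1,2,2$, gets the lower bound $\Vert\overset{\vee}{P}\Vert_{\mathcal L(^2\ell_p^3)}\ge 6/3^{2/p}$ by evaluating at $(1,\lambda,\lambda^2)$ with $\lambda=e^{2\pi i/3}$, and then uses Proposition~\ref{prop interpolacion} with $\ell_p^3=[\ell_2^3,\ell_\infty^3]_\theta$ to obtain the upper bound $\Vert P\Vert_{\mathcal P(^2\ell_p^3)}\le 2^{2/p}5^{1-2/p}$, whence $\ccc(2,\ell_p^3)\ge(6/5)^{1-2/p}>1$. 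If you want to salvage your write-up, the realistic path is of this shape: pick a concrete polynomial, bound its multilinear norm from below by evaluation and its polynomial norm from above (interpolation between two spaces where the norm is computable is a fine tool for the latter), rather than attempting a contradiction that pits two interpolation upper bounds against each other.
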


\begin{proof}Let $P: \mathbb C^3 \to \mathbb C$ be the $2$-homogeneous polynomial defined by $$P(z_1,z_2,z_3)=z_1^2+z_2^2+z_3^2-2z_1z_2-2z_1z_3-2z_2z_3.$$
In \cite[Addendum]{Var74} it is shown that $\Vert P \Vert_{\mathcal P(^2 \ell_\infty^3)} =5$.

Note that the symmetric bilinear mapping associated to $P$ is given by
\begin{equation}
 \overset{\vee}{P}((w_1,w_2,w_3),(z_1,z_2,z_3))   = \begin{bmatrix}
w_1 & w_2 & w_3\\
\end{bmatrix}
\begin{bmatrix} % O matrices como esta de 4 x 3
\;\;1 & -1 & -1  \\
-1 & \;\;1 & -1 \\
-1 & -1 & \;\;1
\end{bmatrix}
\begin{bmatrix}
z_1 \\
z_2 \\

z_3
\end{bmatrix}.
\end{equation}
If we set $\lambda:=e^{\frac{2\pi i}{3}}$, there exists  $(w_1,w_2,w_3)$ with $\vert w_1 \vert = \vert w_2 \vert =\vert w_3 \vert =1$ such that
\begin{eqnarray}
\overset{\vee}{P}((w_1,w_2,w_3),(1,\lambda,\lambda^2))=
|1-\lambda-\lambda^2|+|-1+\lambda-\lambda^2|+|-1-\lambda+\lambda^2|=6.\nonumber \
\end{eqnarray}

Therefore $$\Vert \overset{\vee}{P} \Vert_{\mathcal L(^2 \ell_p^3)}  \geq \frac{6}{\Vert(w_1,w_2,w_3)\Vert_p \Vert(1,\lambda,\lambda^2)\Vert_p} =\frac{6}{3^\frac{2}{p}}.$$
Thus, we only need to see that $$\Vert P \Vert_{\mathcal P(^2 \ell_p^3)} <  \frac{6}{3^\frac{2}{p}}.$$

Note that 
$$\Vert P \Vert_{\mathcal P(^2 \ell_2^3)}=\Vert \overset{\vee}{P} \Vert_{\mathcal L(^2 \ell_2^3)}  =2,$$
since the eigenvalues of the associated symmetric matrix are $-1,2,2$.

Let $0<\theta <1$ such that
$$\frac{1}{p}= \frac{1-\theta}{2}+\frac{\theta}{\infty}.$$
Observe that  $\ell_p^3(\mathbb C)= [\ell_2^3(\mathbb C), \ell_\infty^3(\mathbb C)]_{\theta}$ so,  by Proposition \ref{prop interpolacion}, we get
$$
\Vert P \Vert_{\mathcal P(^2 \ell_p^3)} \leq \Vert P \Vert_{\mathcal P(^2 \ell_2^3)}^{1-\theta} \Vert P \Vert_{\mathcal P(^2 \ell_\infty^3)}^\theta =2^{1-\theta} 5^\theta = 2^{\frac{2}{p}}5^{1-\frac{2}{p}}.$$
Thus,
$\ccc(2,\ell_p^3(\mathbb C)) \geq (\frac{6}{5})^{1-\frac{2}{p}}$,
which concludes the proof.
\end{proof}

It should be noted that we cannot have a statement as in the previous lemma for dimension 2. For example,  $\ccc(2,\ell_\infty^2(\mathbb C))=1$ (see \cite[Proposition 3]{S2}).

Given a number $1 \leq r \leq \infty$, we denote by $r'$ its conjugate exponent (i.e., $1/r + 1/r'=1$).
We now derive Theorem \ref{teo tipo y cotipo}. 

\begin{proof}[Proof of Theorem \ref{teo tipo y cotipo}]

We follow a similar procedure used in \cite[Section 4]{bayart2010weak}. For any $\varepsilon >0$ and $d\in \zN$, $X$ admits two quotients: $X/Y_p$ which is $(1+\varepsilon)$-isomorphic to $\ell_{\ppp(X^*)'}^d$ and $X/Y_q$ which is $(1+\varepsilon)$-isomorphic to $\ell_{\qqq(X^*)'}^d$ (see \cite[Proposition 2.3]{bayart2010weak}).

If $\ppp(X^*)\neq 2$ then $\ppp(X^*)'\neq 2$. Therefore, we have that $\ccc(2,\ell_{\ppp(X^*)'}^3)>1$ by the previous lemma. Thus, there is $\varepsilon >0$ such that
$$\frac{\ccc(2,\ell_{\ppp(X^*)'}^3) }{(1+\varepsilon)^2} >1.$$
Then, using \cite[Lemma 0]{S1}, we obtain
$$\ccc(2,X)\geq \ccc(2,X/Y_p) \geq \frac{\ccc(2,\ell_{\ppp(X^*)'}^3)}{(1+\varepsilon)^2}>1,$$
which is a contradiction. If $\qqq(X^*)\neq 2$, the same argument but using the fact that $\ccc(2,\ell_{\qqq(X^*)'}^2)>1$, yields again a contraction.

All this shows that $\ppp(X^*)=\qqq(X^*)=2$. Taking into account that $\ppp(X^*)>1$, we get that
$$\ppp(X)=\ppp(X^{**})=\qqq(X^*)'=2\,\,\,\mbox{ and }\,\,\,\qqq(X)=\qqq(X^{**})\le \ppp(X^*)'=2,$$
which concludes the proof.

\end{proof}

It should be noted that the reciprocal of Theorem \ref{teo tipo y cotipo} is not valid. 
 For  example, if $\mathcal{H}$ is a Hilbert space then $\mathcal{H} \oplus_{2} \ell_1^2$ is isomorphic to a Hilbert space (an so $\ppp(\mathcal{H} \oplus_{2} \ell_1^2)=\qqq(\mathcal{H} \oplus_{2} \ell_1^2)=2$), but clearly $\ccc(2,\mathcal{H} \oplus_{2} \ell_1^2) \geq \ccc(2, \ell_1^2) =2$.
This is not at all surprising since we cannot characterize an isometric property (such as the \textsl{symmetric operator norm property}) with an isomorphic property (like $\ppp(X)=\qqq(X)=2$).

%%%%%%%%%%%%%%%%%%%%%%%%%%%%%%%%%%%%%%%%%%%%%%%%%%%%%%%%%%%%%%%%%%%%%%%%%%
%%%%%%%%%%%%%%%%%%%%%%%%%%%%%%%%%%%%%%%%%%%%%%%%%%%%%%%%%%%%%%%%%%%%%%%%%%
%%%%%%%%%%%%%%%%%%%%%%%%%%%%%%%%%%%%%%%%%%%%%%%%%%%%%%%%%%%%%%%%%%%%%%%%%%

%%%%%%%%%%%%%%%%%%%%%%%%%%%%%%%%%%%%%%%%%%%%%%%%%%%%%%%%%%%%%%%%%%%%%%%%%%
%%%%%%%%%%%%%%%%%%%%%%%%%%%	%%%%%%%%%%%%%%%%%%%%%%%%%%%%%%%%%%%%%%%%%%%%%%%
%%%%%%%%%%%%%%%%%%%%%%%%%%%%%%%%%%%%%%%%%%%%%%%%%%%%%%%%%%%%%%%%%%%%%%%%%%

\section{Nuclear norm of the product of polynomials} \label{nuclear}

A classical result of Gupta establishes a duality between $\mathcal P_N(^kX)$ and $\mathcal P(^kX^*)$, whenever $X^*$ has the approximation property \cite[Proposition 2.10]{dineen1999complex}. Therefore it is natural that certain constants related to polarization of polynomials in $\mathcal P(^kX^*)$ have their counterpart in  $\mathcal P_N(^kX)$.
As mentioned in the introduction our aim in this section is to study $\mmm(k_1,\ldots,k_n,X)$, the best constant  such that inequality \eqref{problema} holds. The problem of estimating/bounding the constant $\mmm$ was previously considered, for instance, in \cite[Lemma 15]{dineen1971holomorphy}, \cite[Corollary 2]{abuabara1979version}, \cite{nicodemi1981homomorphisms} and \cite[Exer. 2.63]{dineen1999complex}.

If $k_1,\ldots, k_n$ are natural numbers such that $k_1+\cdots+k_n=k$ and $\mathbf{x}_1,\ldots, \mathbf{x}_n\in X$, we denote by $(\mathbf{x}_1^{k_1},\ldots, \mathbf{x}_n^{k_n})$,$k$-tuple where $x_j$ appears $k_j$-times. 
In many situations the symmetric $k$-linear form is evaluated in this kind of $k$-tuples (which admit many repetitions). 
For example, if $\hat d ^j P $ stands for the $j$th-derivative of $P$ (see \cite[Chapter 3]{dineen1999complex}), we have $\frac{\hat d ^j P (\mathbf{x}_1)}{j!}(\mathbf{x}_2)=\binom{k}{j} \overset{\vee}{P}(\mathbf{x}_1^{k-j},\mathbf{x}_2^{j})$.
 An inequality by Harris \cite{H} states that if $\mathbf{x}_1,\ldots, \mathbf{x}_n$ are vectors in $B_X$, the unit ball of a complex space $X$, then for every $P \in \mathcal P(^kX)$
\begin{equation}\label{ec polarizacion harris}
|\overset{\vee}{P} (\mathbf{x}_1^{k_1},\ldots, \mathbf{x}_n^{k_n})|\leq \frac{k_1!\cdots k_n!}{k!}\frac{k^k}{k_1^{k_1}\cdots k_n^{k_n}} \Vert P\Vert.
\end{equation}
It is worthwhile to mention that in general this bound cannot be reduced. For simplicity, it is natural to define for a fixed space $X$ and $k_1+\dots+k_n=k$,
the norm
\begin{equation}
    \Vert P \Vert_{k_1,\dots,k_n;X}:= \sup_{x_1, \dots, x_n \in B_X} |\overset{\vee}{P} (\mathbf{x}_1^{k_1},\ldots, \mathbf{x}_n^{k_n})|.
\end{equation}

We denote the best constant $C>0$ such that 
\begin{equation}\label{ec polarizacion harris2}
\Vert P \Vert_{k_1,\dots,k_n;X}\leq C \Vert P\Vert,
\end{equation}
for every $P \in \mathcal P(^kX)$  as $\ccc(k_1, \dots, k_n;X)$.
These problems have been studied by several authors. We refer the reader to the works cited above, the articles \cite{carando2019symmetric, H, kim2014polarization,  papadiamantis2016polynomial, S1, S2, T}  and the references therein for more information and results on this topic.

\begin{thm}\label{teo producto nucleares} Let $X$ be a Banach space such that $X^*$ has the approximation property, then
$$\mmm(k_1,\ldots,k_n,X)=\ccc(k_1,\ldots,k_n,X^*).$$
\end{thm}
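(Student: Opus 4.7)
The plan is to prove both inequalities using Gupta's duality $\mathcal{P}_N(^kX)^* \cong \mathcal{P}(^kX^*)$, which is available because $X^*$ has the approximation property. The pairing sends a nuclear polynomial $P(\mathbf{x}) = \sum_j \lambda_j\varphi_j(\mathbf{x})^k$ and $R\in \mathcal{P}(^kX^*)$ to $\langle P,R\rangle = \sum_j \lambda_j R(\varphi_j)$, and the nuclear norm is then recovered as
\[
\|P\|_{\mathcal{P}_N(^kX)} = \sup\bigl\{|\langle P,R\rangle| : R\in \mathcal{P}(^kX^*),\ \|R\|\le 1\bigr\}.
\]
The first observation I want to record is an extension of this formula to monomials in linear functionals: for $\varphi_1,\dots,\varphi_n \in X^*$ one has
\[
\langle \varphi_1^{k_1}\cdots \varphi_n^{k_n},R\rangle = \overset{\vee}{R}\bigl(\varphi_1^{k_1},\dots,\varphi_n^{k_n}\bigr).
\]
This is consistent with the defining pairing on the diagonal $\varphi^k$ (which gives $R(\varphi) = \overset{\vee}{R}(\varphi,\dots,\varphi)$) and follows by linearizing in each $\varphi_i$ via the polarization formula.

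For the inequality $\mmm(k_1,\dots,k_n,X)\le \ccc(k_1,\dots,k_n,X^*)$, I would fix nuclear polynomials $P_1,\dots,P_n$ with nuclear representations $P_i(\mathbf{x}) = \sum_{j} \lambda^{(i)}_j \varphi^{(i)}_j(\mathbf{x})^{k_i}$. Multiplying and applying the pairing above termwise gives
\[
\langle P_1\cdots P_n,R\rangle = \sum_{j_1,\dots,j_n} \lambda^{(1)}_{j_1}\cdots \lambda^{(n)}_{j_n}\; \overset{\vee}{R}\bigl((\varphi^{(1)}_{j_1})^{k_1},\dots,(\varphi^{(n)}_{j_n})^{k_n}\bigr).
\]
Applying the definition of $\ccc(k_1,\dots,k_n,X^*)$ to each summand, factoring, and then taking the infimum over representations and the supremum over $R$ in the unit ball of $\mathcal{P}(^kX^*)$ gives the desired bound on $\|P_1\cdots P_n\|_{\mathcal{P}_N(^kX)}$.

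For the reverse inequality $\ccc(k_1,\dots,k_n,X^*)\le \mmm(k_1,\dots,k_n,X)$, I would test the definition of $\ccc(k_1,\dots,k_n,X^*)$ using particular nuclear polynomials $P_i = \varphi_i^{k_i}$ for arbitrary $\varphi_i \in X^*$. Since $\|\varphi_i^{k_i}\|_{\mathcal{P}_N(^{k_i}X)} = \|\varphi_i\|^{k_i}$, the definition of $\mmm$ yields
\[
\|\varphi_1^{k_1}\cdots \varphi_n^{k_n}\|_{\mathcal{P}_N(^kX)}\le \mmm(k_1,\dots,k_n,X)\,\|\varphi_1\|^{k_1}\cdots \|\varphi_n\|^{k_n}.
\]
Dualizing against an arbitrary $R\in \mathcal{P}(^kX^*)$ of norm $\le 1$ and using the identity $\langle \varphi_1^{k_1}\cdots\varphi_n^{k_n},R\rangle = \overset{\vee}{R}(\varphi_1^{k_1},\dots,\varphi_n^{k_n})$ recovers the universal bound defining $\ccc(k_1,\dots,k_n,X^*)$.

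The main obstacle is really the bookkeeping in the first step: verifying that the Gupta pairing, when fed a product of pure powers of functionals, genuinely reproduces the symmetric multilinear form of $R$ evaluated at the repeated tuple $(\varphi_1^{k_1},\dots,\varphi_n^{k_n})$. Once this identification is clean, both inequalities follow from a one-line duality argument combined with the definitions of $\ccc$ and $\mmm$. Nothing else is needed, and no approximation argument beyond the use of AP to get Gupta's theorem is required.
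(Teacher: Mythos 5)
Your proposal is correct and follows essentially the same route as the paper: both directions rest on the duality $\mathcal P_N(^kX)^*\cong\mathcal P(^kX^*)$ (which the paper implements explicitly through the symmetric projective tensor product and the metric surjection $J$), the evaluation identity $\langle \varphi_1^{k_1}\cdots\varphi_n^{k_n},R\rangle=\overset{\vee}{R}(\varphi_1^{k_1},\ldots,\varphi_n^{k_n})$, and the fact that $\Vert\varphi^{k}\Vert_{\mathcal P_N(^{k}X)}=\Vert\varphi\Vert^{k}$. The only (harmless) difference is that by invoking the sup formula for the nuclear norm you use the approximation property in the inequality $\mmm\le\ccc$ as well, whereas the paper's tensor-norm argument shows this direction holds without AP, as noted in its remark.
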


For the proof we will use basic theory of symmetric tensor product of Banach spaces. For an introduction to this topic and the notation we use next,  we refer the reader to Floret's survey \cite{floret1997natural}.

\begin{proof}
First let us see that $\mmm(k_1,\ldots,k_n,X)\leq\ccc(k_1,\ldots,k_n,X^*)$. By the definition of the nuclear norm, it is enough to prove that
\begin{equation}\label{acotacion nuclear}
 \Vert \varphi_1^{k_1} \cdots \varphi_n^{k_n}\Vert_{\mathcal P_{N}(^{k}X)}\leq  \ccc(k_1,\ldots,k_n,X^*),  
\end{equation}
for norm one functionals $\varphi_1, \dots, \varphi_n \in X^*$.

Let 
$$J:\otimes^{{k},s}_{\pi_s} X^* \twoheadrightarrow \mathcal{P}_{N}(^{k}X)$$
be the natural metric surjection (see \cite[Section 2]{floret1997natural}). It is not hard to see that
$$J(\sigma[(\otimes^{k_1} \varphi_1)\otimes \cdots\otimes (\otimes^{k_n} \varphi_n )])= \varphi_1^{k_1}\cdots  \varphi_n^{k_n},$$
where $\sigma:\otimes^{k}_{\pi} E^* \rightarrow \otimes^{k,s}_{\pi_s} E^*$ is the symmetrization operator. Therefore
$$\Vert \varphi_1^{k_1} \cdots \varphi_n^{k_n} \Vert_{P_{N}(^{k}X)}\leq \pi_s(\sigma[(\otimes^{k_1} \varphi_1)\otimes \cdots\otimes (\otimes^{k_n} \varphi_n )]).$$
By duality, the projective symmetric norm is computed as follows
\begin{eqnarray*}
\pi_s(\sigma[(\otimes^{k_1} \varphi_1)\otimes \cdots\otimes (\otimes^{k_n} \varphi_n )]) &=& \sup \{|Q(\sigma((\otimes^{k_1} \varphi_1)\otimes \cdots\otimes (\otimes^{k_n} \varphi_n ))|\} ,\
\end{eqnarray*}
where the supremum is taken over all the norm one polynomials in $\mathcal{P}(^{k}X^{*})$. For any such $Q$ we have
\begin{eqnarray*}
|Q(\sigma((\otimes^{k_1} \varphi_1)\otimes \cdots\otimes (\otimes^{k_n} \varphi_n ))|&=& |\overset{\vee}{Q}(\sigma((\otimes^{k_1} \varphi_1)\otimes \cdots\otimes (\otimes^{k_n} \varphi_n ))| \\
&=&|\overset{\vee}{Q}(\varphi_1^{k_1},\ldots, \varphi_n^{k_n} ))|\\
&\leq & \ccc(k_1,\ldots,k_n,X^*), \
\end{eqnarray*}
which implies \eqref{acotacion nuclear}.

Now let us prove that $\ccc(k_1,\ldots,k_n,X^*)\leq \mmm(k_1,\ldots,k_n,X)$. Given $\varepsilon >0$, take $\varphi_1,\ldots, \varphi_n$ norm one vectors in $X^*$ and a norm one polynomial $Q\in  \mathcal{P}(^{k}X^{*})$ such that
$$\frac{\ccc(k_1,\ldots,k_n,X^*) }{(1+\varepsilon)} <|\overset{\vee}{Q}(\varphi_1^{k_1},\ldots, \varphi_n^{k_n} )|.$$
By the computations done above we have that
\begin{eqnarray*}
|\overset{\vee}{Q}(\varphi_1^{k_1},\ldots, \varphi_n^{k_n} ))| &\leq& \pi_s(\sigma[(\otimes^{k_1} \varphi_1)\otimes \cdots\otimes (\otimes^{k_n} \varphi_n )]). \\
&=&  \Vert \varphi_1^{k_1} \cdots \varphi_n^{k_n} \Vert_{\mathcal P_{N}(^{k}X)}, \
\end{eqnarray*}
where the last equality is valid due to the approximation property of $X^*$.
Putting all together we get
$$\frac{\ccc(k_1,\ldots,k_n,X^*) }{(1+\varepsilon)}< \Vert  \varphi_1^{k_1} \cdots \varphi_n^{k_n} \Vert_{\mathcal P_{N}(^{k}X)}.$$
Recall that $\Vert \varphi_i^{k_i}\Vert_{\mathcal P_{N}(^{k_i}X)} = 1$ for every $i=1, \dots,n$. Then,
$$\frac{\ccc(k_1,\ldots,k_n,X^*) }{(1+\varepsilon)} <  \mmm(k_1,\ldots,k_n,X),$$
which concludes the proof.
\end{proof}

\begin{rem}
Note that for proving the inequality $\mmm(k_1,\ldots,k_n,X)\leq\ccc(k_1,\ldots,k_n,X^*)$ the hypothesis about the  approximation property is unnecessary.
\end{rem}

\begin{cor}\label{cor espacio lp} For $L_p$-spaces we have
$$\mmm(k_1,\ldots,k_n,L_p(\mu))=\ccc(k_1,\ldots,k_n,L_{p^\prime}(\mu)),$$
where $\frac{1}{p}+\frac{1}{p^\prime}=1.$
\end{cor}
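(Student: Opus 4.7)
The plan is to derive this corollary as a direct instance of Theorem \ref{teo producto nucleares} applied to $X = L_p(\mu)$. First I would recall the standard duality $L_p(\mu)^\ast \cong L_{p'}(\mu)$ (isometrically) for $1 \le p < \infty$, where $p'$ is the conjugate exponent. With this identification, the right-hand side $\ccc(k_1,\ldots,k_n, L_p(\mu)^\ast)$ from the theorem becomes $\ccc(k_1,\ldots,k_n, L_{p'}(\mu))$, which is exactly the expression appearing in the statement.

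Next, I would verify the hypothesis of Theorem \ref{teo producto nucleares}: namely, that $X^\ast = L_{p'}(\mu)$ has the approximation property. This is a classical fact, since every $L_q(\mu)$ space has a (monotone) Schauder basis when $\mu$ is $\sigma$-finite, and more generally $L_q(\mu)$ has the metric approximation property for all $1 \le q \le \infty$ and every measure $\mu$ (this is a well-known result going back to Grothendieck; it can be seen by approximating $L_q$-functions via conditional expectations onto finite $\sigma$-algebras generated by finite measurable partitions, which yields norm-one finite-rank projections converging strongly to the identity). Consequently, $L_{p'}(\mu)$ satisfies the required hypothesis.

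Combining these two observations, Theorem \ref{teo producto nucleares} immediately yields
\[
\mmm(k_1,\ldots,k_n, L_p(\mu)) = \ccc(k_1,\ldots,k_n, L_p(\mu)^\ast) = \ccc(k_1,\ldots,k_n, L_{p'}(\mu)),
\]
which is the desired identity. There is essentially no obstacle here; the only minor subtlety is the range of $p$. For $1 < p < \infty$ both $L_p(\mu)$ and its dual are reflexive and the argument is seamless; for $p=1$, the dual is $L_\infty(\mu)$, which still has the approximation property, so the statement goes through as well. The case $p=\infty$ would fall outside the scope of Theorem \ref{teo producto nucleares} as stated (since $L_\infty(\mu)^\ast$ need not have the approximation property), so I would restrict the claim to $1 \le p < \infty$ or simply note it is implicit.
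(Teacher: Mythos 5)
Your treatment of $p\neq\infty$ coincides with the paper's first line: for $1\le p<\infty$ one has $L_p(\mu)^*=L_{p'}(\mu)$ isometrically, this dual has the (metric) approximation property, and Theorem \ref{teo producto nucleares} gives the identity at once. The problem is the case $p=\infty$ (with $p'=1$), which the corollary as stated includes and which is where the actual work in the paper's proof lies; your proposal simply discards it, so it does not prove the full statement. Moreover, the reason you give for discarding it is not correct: $L_\infty(\mu)^*$ \emph{does} have the approximation property, since it is an abstract $L_1$-space (a space of finitely additive measures, isometric to some $L_1(\nu)$ by Kakutani's representation theorem), so Theorem \ref{teo producto nucleares} applies perfectly well to $X=L_\infty(\mu)$ and yields $\mmm(k_1,\ldots,k_n,L_\infty(\mu))=\ccc(k_1,\ldots,k_n,L_\infty(\mu)^*)$.

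The genuine obstacle at $p=\infty$ is different: for infinite dimensional $L_\infty(\mu)$ the dual $L_\infty(\mu)^*$ is \emph{not} $L_1(\mu)$, so one must still show $\ccc(k_1,\ldots,k_n,L_\infty(\mu)^*)=\ccc(k_1,\ldots,k_n,L_1(\mu))$, and this requires an extra argument. The paper proves the two inequalities separately: Harris's polarization inequality \cite{H} bounds $\ccc(k_1,\ldots,k_n,L_\infty(\mu)^*)$ above by $\frac{k_1!\cdots k_n!}{k!}\,\frac{k^k}{k_1^{k_1}\cdots k_n^{k_n}}$, which is precisely the value of $\ccc(k_1,\ldots,k_n,L_1(\mu))$; conversely, the Aron--Berner extension \cite{aron1978hahn} extends every polynomial on $L_1(\mu)$ to its bidual with the same norm, giving $\ccc(k_1,\ldots,k_n,L_1(\mu))\le\ccc(k_1,\ldots,k_n,L_1(\mu)^{**})=\ccc(k_1,\ldots,k_n,L_\infty(\mu)^*)$. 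Without an argument of this kind your proof establishes the corollary only for $p\neq\infty$.
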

\begin{proof}
If $p\neq \infty$ this is a particular case of the above theorem since $L_p(\mu)^*=L_{p^\prime}(\mu)$ has the approximation property. Thus we only need to prove the result for an infinite dimensional space $L_\infty(\mu)$. 

By Theorem \ref{teo producto nucleares} and \cite[Theorem 1]{H} we know that
\begin{eqnarray*}
\mmm(k_1,\ldots,k_n,L_\infty(\mu))&=&\ccc(k_1,\ldots,k_n,L_{\infty}(\mu)^*) \\
&\leq& \frac{k^k}{k_1^{k_1}\cdots k_n^{k_n}} \frac{k_1!\cdots k_n!}{k!} \\
&=&\ccc(k_1,\ldots,k_n,L_1(\mu)).\
\end{eqnarray*}

On the other hand, since every continuous $k$-homogeneous polynomial on $L_1(\mu)$ extends to a $k$-homogeneous polynomial of the same norm defined on the bidual $L_1(\mu)^{**}$ (see \cite{aron1978hahn}), we have the other inequality
\begin{eqnarray*}
\ccc(k_1,\ldots,k_n,L_1(\mu))&\leq& \ccc(k_1,\ldots,k_n,L_1(\mu)^{**})\\
&=&\ccc(k_1,\ldots,k_n,L_{\infty}(\mu)^*)\\
&=&\mmm(k_1,\ldots,k_n,L_\infty(\mu)),\
\end{eqnarray*}
and this concludes the proof.
\end{proof}

%%%%%%%%%%%%%%%%%%%%%%%%%%%%%%%%%%%%%%%%%%%%%%%%%%%%%%%%%%%%%%%%%%%%%%%%%%
%%%%%%%%%%%%%%%%%%%%%%%%%%%%%%%%%%%%%%%%%%%%%%%%%%%%%%%%%%%%%%%%%%%%%%%%%%
%%%%%%%%%%%%%%%%%%%%%%%%%%%%%%%%%%%%%%%%%%%%%%%%%%%%%%%%%%%%%%%%%%%%%%%%%%

We continue with some comments on the constant $\mmm$ for some classical spaces. In \cite{S1, S2} Sarantopoulos studied the polarization constants for several spaces. Using Theorem \ref{teo producto nucleares} and some of Sarantopoulos' results we derive the following
\begin{prop}  Let $k_1,\ldots, k_n$ be natural numbers such that $k_1+\cdots+k_n=k$.
\begin{enumerate}
    \item If $k\leq p $,  then for any complex space $L_p(\mu)$ we have
    \begin{equation}\nonumber
        \mmm(k_1,\ldots, k_n;  L_{p}(\mu))\leq \left(\frac{k^k}{k_1^{k_1}\cdots k_n^{k_n}}\right)^{1-\frac{1}{p}} \frac{k_1!\cdots k_n!}{k!}.
    \end{equation}
    Moreover, this inequality is in fact an equality provided that the dimension of the space $ L_{p}(\mu)$ is at least $n$.
    \item If $k$ is an even number,  $\frac{p}{p-1} \leq \frac{k}{2}$, then for any complex space $L_p(\mu)$ we have
    $$\mmm\left(\frac{k}{2},\frac{k}{2},  L_p(\mu)\right)=1.$$
    \item For  any complex Hilbert space $\mathcal{H}$
    $$\mmm(k_1,\ldots, k_n;  \mathcal{H} \oplus_1 \zC)=1.$$
\end{enumerate}
\end{prop}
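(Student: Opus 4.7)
The plan is to apply Theorem \ref{teo producto nucleares} to each item and reduce the statement to a polarization-type estimate on the dual space. For an $L_p$-space with $1 \leq p < \infty$ the dual is $L_{p'}(\mu)$ (and the case $p = \infty$, if it arose, would follow by the same two-sided bidual reasoning used in the proof of Corollary \ref{cor espacio lp}), while for $\mathcal{H} \oplus_1 \mathbb{C}$ we have $(\mathcal{H} \oplus_1 \mathbb{C})^* = \mathcal{H} \oplus_\infty \mathbb{C}$. Since both duals possess the approximation property, Theorem \ref{teo producto nucleares} yields
\[
\mmm(k_1,\ldots,k_n; X) = \ccc(k_1,\ldots,k_n; X^*)
\]
in each of the three settings, and the task becomes the calculation of the right-hand side.

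For item (1), the hypothesis $k \leq p$ translates to $p' \leq k/(k-1)$, and under this constraint Sarantopoulos' generalized polarization inequality for $L_{p'}$-spaces (the multi-index version of \cite[Theorem 1]{S1}) yields
\[
\ccc(k_1,\ldots,k_n; L_{p'}(\mu)) \leq \left(\frac{k^k}{k_1^{k_1}\cdots k_n^{k_n}}\right)^{1/p'} \frac{k_1!\cdots k_n!}{k!}.
\]
Since $1/p' = 1 - 1/p$, this matches the stated bound. Provided $\dim L_p(\mu) \geq n$, sharpness is obtained by choosing $n$ disjointly supported norm-one functionals $\varphi_1,\ldots,\varphi_n \in L_{p'}(\mu)$ and considering $P = \varphi_1\cdots\varphi_n \in \mathcal P({}^n L_{p'}(\mu))$: evaluating the associated symmetric multilinear form at $(\varphi_1^{k_1},\ldots,\varphi_n^{k_n})$ realizes the equality case in \eqref{ec polarizacion harris}, and the disjoint supports ensure that the induced norms behave as an $\ell_{p'}^n$-computation.

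For item (2), the condition $p/(p-1) \leq k/2$ is exactly $p' \leq k/2$, and the same dualization reduces the claim to $\ccc(k/2, k/2; L_{p'}(\mu)) = 1$. This is a case covered by Sarantopoulos' refined bounds for two equal-degree slots in $L_q$-spaces with sufficiently small $q$, where the Hilbert-like estimate forces the polarization constant down to $1$; the reverse inequality is automatic since $\overset{\vee}{P}(\mathbf{x}^{k/2},\mathbf{x}^{k/2}) = P(\mathbf{x})$. Item (3) is analogous: Sarantopoulos' \cite[Proposition 3]{S2} (in its multi-index form) gives $\ccc(k_1,\ldots,k_n; \mathcal{H} \oplus_\infty \mathbb{C}) = 1$, and by the duality above $\mmm(k_1,\ldots,k_n; \mathcal{H} \oplus_1 \mathbb{C}) = 1$ follows.

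The principal obstacle here is bibliographical rather than mathematical. Each of the three estimates has to be invoked in its multi-index version $\ccc(k_1,\ldots,k_n;\cdot)$ rather than in the more commonly stated single-index form $\ccc(k;\cdot)$. These multi-index versions are obtained by an essentially routine adaptation of Sarantopoulos' arguments, replacing the univariate directional derivative by the full symmetric evaluation at $(\mathbf{x}_1^{k_1},\ldots,\mathbf{x}_n^{k_n})$ and tracking the multinomial factors $k_1!\cdots k_n!/k!$ and $k^k/(k_1^{k_1}\cdots k_n^{k_n})$ as they appear in \eqref{ec polarizacion harris}. Once this adaptation is in place, only the bookkeeping of the conjugation $p \leftrightarrow p'$ and the explicit extremal configuration in (1) remain.
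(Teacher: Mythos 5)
Your proposal takes the same route as the paper, whose justification is exactly this and nothing more: dualize via Theorem \ref{teo producto nucleares} (equivalently Corollary \ref{cor espacio lp}), translating $k\le p$ into $p'\le k'$ and $(\mathcal H\oplus_1\zC)^*=\mathcal H\oplus_\infty\zC$, and then quote Sarantopoulos' generalized polarization estimates, which \cite{S1,S2} already give in the multi-index form $\ccc(k_1,\ldots,k_n;\cdot)$, so the ``routine adaptation'' you worry about is not actually needed. The only slip is in your sharpness configuration for item (1): the extremal example is the $k$-homogeneous monomial $z_1^{k_1}\cdots z_n^{k_n}$ on a disjointly supported isometric (and norm-one complemented) copy of $\ell_{p'}^n$ inside $L_{p'}(\mu)$ — equivalently $\varphi_1^{k_1}\cdots\varphi_n^{k_n}$ with the $\varphi_i$ disjointly supported norm-one elements of the dual — not the $n$-homogeneous product $\varphi_1\cdots\varphi_n$, whose symmetric $n$-linear form cannot be evaluated at a $k$-tuple $(\varphi_1^{k_1},\ldots,\varphi_n^{k_n})$ when some $k_i>1$.
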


Here we used only some of the results proved in \cite{S1, S2}. Furthermore, in the literature there are several other works in which the polarization constants, or related inequalities, are studied. See for example the aforementioned articles \cite{carando2019symmetric, H, kim2014polarization, papadiamantis2016polynomial, T}.

Note that over the range $1 \leq p \leq k^\prime$, where $\frac{1}{k}+\frac{1}{k^\prime}=1$, the values of the polarization constants of infinite dimensional complex $L_p$-spaces are known. We now give some more information over the range $k^\prime \leq p \leq k$. The following result can be derived  by interpolating  the operators \mbox{$T_0:\ell_2^n(L_2(u))\rightarrow L_2(L_2(u), t)$} and $T_1:\ell_1^n(L_q(u))\rightarrow L_\infty(L_q(u), t)$  both defined as
$$(f_1(u),\ldots, f_n(u))\rightarrow f_1(u) s_1(t) + \cdots +f_n(u) s_n(t)$$
and mimicking the proof of \cite[Theorem 1]{S1}.

\begin{prop} If $k^\prime \leq p \leq k$, then for any complex  space $L_p(\mu)$ we have
\begin{equation}\label{ec other range}
    \ccc(k_1,\ldots, k_n; L_p(\mu)) = \mmm(k_1,\ldots, k_n; L_{p'}(\mu)) \leq \left(\frac{k^k}{k_1^{k_1}\cdots k_n^{k_n}}\right)^{\frac{1}{k^\prime}} \frac{k_1!\cdots k_n!}{k!}.
\end{equation} \end{prop}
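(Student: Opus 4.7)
The equality $\ccc(k_1,\ldots,k_n; L_p(\mu))=\mmm(k_1,\ldots,k_n; L_{p'}(\mu))$ is immediate from Theorem~\ref{teo producto nucleares}: in the range $k'\le p\le k$ one has $p>1$, so $L_{p'}(\mu)^\ast=L_p(\mu)$ has the approximation property and the theorem applies directly. All the real work is the upper bound for $\ccc(k_1,\ldots,k_n; L_p(\mu))$, and the plan is to adapt the classical Sarantopoulos scheme using the interpolation pair the authors point to. Introduce independent Steinhaus variables $s_1(t),\ldots,s_n(t)$ on $[0,1]$ (i.i.d.\ uniform on the unit circle) and weights $\lambda_1,\ldots,\lambda_n>0$. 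Expanding $P(\sum_i \lambda_i s_i(t) f_i)$ via the multinomial theorem and using the orthogonality relation $\int_0^1 s^\alpha \overline{s^\beta}\,dt=\delta_{\alpha\beta}$ isolates the $(k_1,\ldots,k_n)$-coefficient:
$$
\binom{k}{k_1,\ldots,k_n}\lambda_1^{k_1}\cdots\lambda_n^{k_n}\,\overset{\vee}{P}(f_1^{k_1},\ldots,f_n^{k_n})=\int_0^1 P\!\left(\sum_i\lambda_i s_i(t)f_i\right)\overline{s_1(t)^{k_1}\cdots s_n(t)^{k_n}}\,dt.
$$
Taking absolute values gives $|\overset{\vee}{P}(f_1^{k_1},\ldots,f_n^{k_n})|\le \frac{k_1!\cdots k_n!}{k!\,\lambda_1^{k_1}\cdots \lambda_n^{k_n}}\|P\|\int_0^1\|\sum_i \lambda_i s_i(t) f_i\|_p^k\,dt$.

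The crux is then to prove the vector-valued Khintchine-type inequality
$$
\left(\int_0^1\Big\|\sum_i \lambda_i s_i(t)f_i\Big\|_p^{k}\,dt\right)^{1/k}\le \left(\sum_i |\lambda_i|^{k'}\|f_i\|_p^{k'}\right)^{1/k'}
$$
for every $p\in[k',k]$. This is where the suggested interpolation enters. Both endpoint operators $T_0:\ell_2^n(L_2(\mu))\to L_2(L_2(\mu),t)$ (norm $1$, by Bessel) and $T_1:\ell_1^n(L_q(\mu))\to L_\infty(L_q(\mu),t)$ (norm $1$, by the triangle inequality) send $(f_1,\ldots,f_n)\mapsto \sum_i s_i(t)f_i(\cdot)$. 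Complex interpolation at $\theta=(k-2)/k$ yields $T_\theta:\ell_{k'}^n(L_p(\mu))\to L_k(L_p(\mu),t)$ with norm at most one, using the standard diagonal identities $[\ell_2^n(L_2),\ell_1^n(L_q)]_\theta=\ell_{k'}^n(L_p)$ and $[L_2(L_2,t),L_\infty(L_q,t)]_\theta=L_k(L_p,t)$, where $1/p=1/k+\theta/q$. As $q$ ranges over $[1,\infty]$ the exponent $p$ sweeps exactly $[k',k]$, matching the range in the statement.

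With the inequality in hand, optimize over $\lambda_i>0$: the choice $\lambda_i^{k'}=k_i/k$ makes $\sum_i\lambda_i^{k'}=1$ while $\prod_i\lambda_i^{k_i}=(\prod_i k_i^{k_i}/k^k)^{1/k'}$, so plugging back collapses the right-hand side to
$$
|\overset{\vee}{P}(f_1^{k_1},\ldots,f_n^{k_n})|\le \frac{k_1!\cdots k_n!}{k!}\left(\frac{k^k}{k_1^{k_1}\cdots k_n^{k_n}}\right)^{1/k'}\|P\|,
$$
which is precisely the claimed bound. The main obstacle I foresee is the interpolation step: one needs to verify rigorously that the complex interpolation of the vector-valued couples $(\ell_{r_0}^n(L_{q_0}),\ell_{r_1}^n(L_{q_1}))$ and $(L_{r_0}(L_{q_0},t),L_{r_1}(L_{q_1},t))$ is indeed the diagonal pair used above, and that $T_0,T_1$ really form a compatible couple whose interpolated operator acts by the same formula. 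Both points are classical (in the spirit of \cite{bergh2012interpolation}) but slightly delicate because of the Bochner-type nature of the target spaces.
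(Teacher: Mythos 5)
Your proposal is correct and follows essentially the same route the paper intends: the paper only sketches the argument ("interpolate $T_0:\ell_2^n(L_2(\mu))\to L_2(L_2(\mu),t)$ and $T_1:\ell_1^n(L_q(\mu))\to L_\infty(L_q(\mu),t)$ and mimic the proof of Sarantopoulos' Theorem 1"), and your write-up is a faithful expansion of exactly that sketch, with the equality part correctly reduced to Theorem~\ref{teo producto nucleares} (Corollary~\ref{cor espacio lp}), the correct parameter bookkeeping $\theta=(k-2)/k$, source $\ell_{k'}^n(L_p)$, target $L_k(L_p,t)$, $p$ sweeping $[k',k]$, and the optimal weights $\lambda_i^{k'}=k_i/k$ yielding the stated constant. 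The vector-valued interpolation identities you flag (including the $L_\infty$ endpoint in the $t$-variable) are the same standard ingredients used in Sarantopoulos' original argument, so no genuinely different idea is involved.
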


In particular, in the range of interest, the constants can not be bigger than on $[1,k']$.  This bound is an improvement from the one given in \cite[Proposition 6]{S2}. Although in general this is not an optimal bound, in the case $p=k^\prime$  we recover the constant in  \cite[Theorem 1]{S1} which is optimal. Moreover, Proposition \ref{prop cont} below implies that the bound given in \eqref{ec other range} is 
arbitrarily close to the actual constant, provided that $p$ is close enough to $k^\prime$.

Although the exact value of the polarization constants is not known for every $L_p$-space, Sarantopoulos proved that for a fixed value of $k$, $\ccc(k, L_p(\mu))$ is an increasing function of $p$ over the range $2\leq p \leq \infty$. The same holds true --with identical arguments-- for $\ccc(k_1,\ldots, k_n;  L_p(\mu))$. 
Of course, by Corollary \ref{cor espacio lp}, all these statements have their  counterpart for $\mmm(k_1, \dots, k_n; L_p(\mu))$.
The following proposition shows that the constants  $\ccc$ and $\mmm$ over $L_p$-spaces are continuous on the parameter $1 \leq p \leq \infty$. 

\begin{prop}\label{prop cont}
Let $k_1, \dots, k_n$ be natural numbers. Then, the constants  $\ccc(k_1, \dots, k_n; L_p(\mu))$ and $\mmm(k_1, \dots, k_n; L_p(\mu))$ are continuous functions on  $1 \leq p \leq \infty.$
\end{prop}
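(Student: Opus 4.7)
The plan is to prove continuity of $p\mapsto \ccc(L_p) := \ccc(k_1,\dots,k_n;L_p(\mu))$ on $[1,\infty]$; continuity of $\mmm$ then follows from Corollary~\ref{cor espacio lp}, since $\mmm(k_1,\dots,k_n;L_p(\mu)) = \ccc(k_1,\dots,k_n;L_{p'}(\mu))$ and $p\mapsto p'$ is a homeomorphism of $[1,\infty]$. Lower and upper semi-continuity will be established separately.

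For \emph{lower semi-continuity} at $p$, I would take an $\varepsilon$-nearly extremal configuration $(P, x_1, \dots, x_n)$ for $\ccc(L_p)$ and, by density, assume that $P$ factors through, and the $x_i$ lie in, a finite-dimensional subspace $F \subset L_p(\mu)$ spanned by simple functions with pairwise disjoint supports of finite measure, so that $F$ is isometric to $\ell_p^d$. Since the same $F$ sits inside every $L_q(\mu)$ with a norm that depends continuously on $q$ on the fixed underlying vector space $\mathbb{C}^d$, the same configuration witnesses $\ccc(L_q) \ge \ccc(L_p) - 2\varepsilon$ for $q$ sufficiently close to $p$.

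For \emph{upper semi-continuity}, I plan to combine Proposition~\ref{prop interpolacion} with Calder\'on's identity $[L_{p_0}(\mu), L_{p_1}(\mu)]_\theta = L_p(\mu)$, for $1/p = (1-\theta)/p_0 + \theta/p_1$, to derive the log-convexity-type inequality
$$
\ccc(L_p(\mu))\;\le\;\ccc(L_{p_0}(\mu))^{1-\theta}\,\ccc(L_{p_1}(\mu))^{\theta}.
$$
At the finite-dimensional level $\ell_q^d$, Proposition~\ref{prop interpolacion} applies directly to a polynomial $P \in \mathcal{P}(^k \mathbb{C}^d)$ and, via its multi-homogeneous analogue, to the polarization $\overset{\vee}{P}$ evaluated on distinguished tuples $(x_1^{k_1}, \dots, x_n^{k_n})$. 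To avoid the parasitic factor $\|P\|_{\ell_{p_0}^d}^{1-\theta}\|P\|_{\ell_{p_1}^d}^\theta / \|P\|_{\ell_q^d} \ge 1$ that a naive application would leave behind, I would rebuild $P$ through an auxiliary analytic family $P_z$ realising $P = P_\theta$ on $\ell_q^d$ with balanced norms on $\ell_{p_0}^d$ and $\ell_{p_1}^d$, in the three-lines spirit used in the proof of Proposition~\ref{prop interpolacion}. Taking the supremum over $d$ transfers the bound to $L_p(\mu)$; letting $p_0, p_1 \to p$ with $\theta$ controlled, and using the a priori bound $\ccc(L_p) \le \frac{k^k}{k_1^{k_1}\cdots k_n^{k_n}}\frac{k_1!\cdots k_n!}{k!}$ from Harris's inequality, then yields $\limsup_{q \to p}\ccc(L_q) \le \ccc(L_p)$.

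The main obstacle is precisely the elimination of the parasitic factor: explicit examples already in dimension two (e.g.\ $P(z_1,z_2)=z_1^2-z_2^2$ with $p_0=1$, $p_1=\infty$, $\theta=1/2$) show that this factor can be as large as $\sqrt{2}$, so a direct use of Proposition~\ref{prop interpolacion} cannot by itself establish log-convexity of $\ccc$; the analytic-family normalization is what makes the argument go through. The endpoints $p = 1, \infty$ require a separate treatment, since simple functions of finite-measure support fail to be dense in $L_\infty(\mu)$. There I would use the explicit equalities $\ccc(L_1(\mu)) = \ccc(L_\infty(\mu)) = \frac{k^k}{k_1^{k_1}\cdots k_n^{k_n}}\frac{k_1!\cdots k_n!}{k!}$ coming from Harris together with Sarantopoulos's monotonicity on $[2,\infty]$ and the upper bound from the proposition preceding Proposition~\ref{prop cont} to sandwich $\ccc(L_q)$ between quantities converging to the same limit.
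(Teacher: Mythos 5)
Your lower--semicontinuity half is essentially sound (restricting a near--extremal $P$ to the span of disjointly supported simple functions only improves the ratio, the resulting copy of $\ell_p^d$ sits in every $L_q$ with norms varying continuously, and $\ccc(k_1,\dots,k_n;\ell_q^d)\le \ccc(k_1,\dots,k_n;L_q(\mu))$ since $\ell_q^d$ is $1$-complemented). The genuine gap is in the upper--semicontinuity half, which is the substantive direction. Your argument there rests entirely on the log-convexity inequality $\ccc(L_p)\le \ccc(L_{p_0})^{1-\theta}\ccc(L_{p_1})^{\theta}$, and, as you yourself observe, Proposition \ref{prop interpolacion} does not yield it because of the parasitic factor $\Vert P\Vert_{p_0}^{1-\theta}\Vert P\Vert_{p_1}^{\theta}/\Vert P\Vert_{q}$. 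The proposed fix --- rebuilding $P$ as an analytic family $P_z$ with ``balanced'' endpoint norms --- is precisely the missing argument: no construction is given, and it is not clear that such a normalization exists for an arbitrary near--extremal $P$ (your own example $z_1^2-z_2^2$ shows the obstruction is real). In effect you have reduced the proposition to an unproven log-convexity statement, which is not established in the paper or in the cited literature; so the proposal does not prove upper semicontinuity.

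The endpoint treatment is also unsupported. The equality $\ccc(k_1,\dots,k_n;L_\infty(\mu))=\frac{k^k}{k_1^{k_1}\cdots k_n^{k_n}}\frac{k_1!\cdots k_n!}{k!}$ does not ``come from Harris'': Harris's inequality is only an upper bound, and the exact values available concern $\ccc$ on the range $1\le p\le k'$ (equivalently $\mmm$ for $k\le p$), not $\ccc$ at $p=\infty$; indeed the paper records $\ccc(2,\ell_\infty^2)=1$ and only $\ccc(2,\ell_\infty^3)\ge 6/5$ against the Harris bound $2$. Moreover your sandwich near $p=\infty$ invokes monotonicity and the upper estimate \eqref{ec other range} (which only covers $k'\le p\le k$) where what is needed is a \emph{lower} bound for $\ccc(L_q)$ tending to $\ccc(L_\infty)$. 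The paper avoids all of this with a single localization argument valid for every $1\le p\le\infty$: the near--extremal vectors span a subspace of dimension at most $n$, which by Pelczynski--Rosenthal lies in a $(1+\eta)$-complemented $(1+\eta)$-copy of $\ell_p^m$ with $m\le M(n,\eta)$ \emph{independent of the polynomial}; since $d(\ell_p^m,\ell_q^m)=m^{\vert 1/p-1/q\vert}\le M^{\vert 1/p-1/q\vert}\to 1$ as $q\to p$, one gets $\ccc(L_p)\le(1+\varepsilon)\,\ccc(L_q)$ directly (and symmetrically), with no interpolation and no endpoint values; the statement for $\mmm$ then follows from Corollary \ref{cor espacio lp}, as in your plan. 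Some such uniform finite-dimensional reduction is what your upper bound is missing.
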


\begin{proof} By Corollary \ref{cor espacio lp} we only need to prove the continuity of $\ccc(k_1, \dots, k_n; L_p(\mu))$.  Let us assume first that we are dealing with infinite  dimensional $L_p$ spaces. Given $\epsilon>0$ we will see that 
\begin{equation} \label{desig continuidad}
    \ccc(k_1, \dots, k_n; L_p(\mu)) \leq \ccc(k_1, \dots, k_n; L_q(\mu)) (1 + \epsilon),
\end{equation}
provided that $\vert p -q \vert$ is small enough. 

Denote by $k:=k_1+ \dots+k_n$ and let $\eta>0$ fixed (to be defined later). Given $P\in \mathcal P(^kL_p(\mu))$, consider $x_1, \dots, x_n \in B_{L_p(\mu)}$ such that 
$$(1-\eta) \Vert P \Vert_{k_1,\dots,k_n;L_p(\mu)} \leq  |\overset{\vee}{P} (\mathbf{x}_1^{k_1},\ldots, \mathbf{x}_n^{k_n})|.$$
 By  \cite[Theorem A]{pelczynski1974localization}, we know there is a natural number $M:=M(n,\eta)$ with the following property: given a subspace $E$ of $L_p$ of dimension less than or equal to $n$, there is a subspace $F \supset E$ of dimension $m \leq M$ such that $F$ is $(1+\eta)$-complemented $(1+\eta)$-isomorph of $\ell_p^m$.
 We will use this result for $E:=\mbox{span}\{x_1, \dots, x_n\}.$

 We denote by $\iota_F: F \to L_p(\mu)$ the canonical inclusion.
 Let $T:F \to \ell_p^m$ be an isomorphism such that $\Vert T\Vert \Vert T^{-1}\Vert  \leq 1+\eta$. Given $1 \leq q \leq \infty$ consider $S:\ell_p^m \to \ell_q^m$ so that $ d(\ell_p^m,\ell_q^m)=\Vert S\Vert \Vert S^{-1}\Vert ,$ where $d$ stands for the Banach-Mazur distance. 
 We have the following inequalities
 \begin{align*}
   (1-\eta) \Vert P \Vert_{k_1,\dots,k_n;L_p(\mu)} & \leq  |\overset{\vee}{P} (\mathbf{x}_1^{k_1},\ldots, \mathbf{x}_n^{k_n})|\leq \Vert P \circ \iota_F \Vert_{k_1,\dots,k_n;F} \\
   & = \Vert P \iota_FT^{-1}S^{-1}ST \Vert_{k_1,\dots,k_n;F} \leq \Vert P \iota_FT^{-1}S^{-1} \Vert_{k_1,\dots,k_n;\ell_q^m} \Vert S \Vert^k \Vert T \Vert^k \\
   & \leq \ccc(k_1, \dots, k_n; \ell_q^m) \Vert P \iota_FT^{-1}S^{-1} \Vert_{\mathcal P (^k\ell_q^m)} \Vert S \Vert^k \Vert T \Vert^k \\ &\leq \ccc(k_1, \dots, k_n; \ell_q^m) \Vert P \iota_F \Vert_{\mathcal P (^kF)} \Vert S \Vert^k \Vert S^{-1} \Vert^k \Vert T \Vert^k  \Vert T^{-1} \Vert^k \\
   &\leq \ccc(k_1, \dots, k_n; \ell_q^m)    d(\ell_p^m,\ell_q^m)^k (1+\eta)^k \Vert P \Vert_{\mathcal P (^kL_p(\mu))} \\
   &\leq \ccc(k_1, \dots, k_n; L_q(\mu))    d(\ell_p^m,\ell_q^m)^k (1+\eta)^k \Vert P \Vert_{\mathcal P (^kL_p(\mu))},
 \end{align*}
where the last inequality is due to \cite[Ecuation (2)]{S2}. Since this holds for any polynomial $P$, by the mere definition of the constant  $\ccc(k_1, \dots, k_n; L_p(\mu))$, we obtain
 \begin{equation}
     \ccc(k_1, \dots, k_n; L_p(\mu)) \leq \ccc(k_1, \dots, k_n; L_q(\mu)) d(\ell_p^m,\ell_q^m)^k (1+\eta)^k (1- \eta)^{-1}.
 \end{equation}

 If we pick beforehand $\eta>0$ such that $(1+\eta)^k (1- \eta)^{-1} \leq (1+\varepsilon)^{1/2}$ and if $q$ is close to $p$ in order that $1 \leq p,q \leq 2$ or $2 \leq p,q \leq \infty$ and $M^{\vert \frac{1}{p} - \frac{1}{q} \vert k} \leq
 (1+\varepsilon)^{1/2}$ we have, by \cite[Proposition 37.6 (i)]{tomczak1989banach}, that $d(\ell_p^m,\ell_q^m)^k =  m^{\vert \frac{1}{p} - \frac{1}{q} \vert k} \leq
 (1+\varepsilon)^{1/2}$. We therefore obtain \eqref{desig continuidad}, which concludes the proof for the infinite dimensional case.

The finite dimensional case, $\ell_p^N$, follows directly from the fact that $d(\ell_p^N,\ell_q^N)=N^{\vert \frac{1}{p} - \frac{1}{q} \vert}$ if $p$ and $q$ are close enough (\cite[Proposition 37.6 (i)]{tomczak1989banach}).
 \end{proof}

Using Bolzano's theorem we obtain that the constant $\ccc(k, \cdot)$ can attain any value between $1$ and $k^k/k!$.

\begin{cor}
Given $1 \leq c \leq k^k/k!$, there is $1 \leq q \leq 2$ such that $\ccc(k,\ell_q)=c.$
\end{cor}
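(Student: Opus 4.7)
The plan is to apply the intermediate value theorem to the function $\varphi(p) := \ccc(k,\ell_p)$ on the interval $p \in [1,2]$. Three ingredients are needed: continuity, the value at the right endpoint, and the value at the left endpoint.

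First, continuity of $\varphi$ on $[1,2]$ is precisely the content of Proposition \ref{prop cont} (applied with $n=1$, $k_1=k$, so that $\ccc(k_1,\ldots,k_n;L_p(\mu)) = \ccc(k,\ell_p)$ when $\mu$ is counting measure on $\mathbb N$). Next, at the right endpoint we have $\varphi(2) = \ccc(k,\ell_2) = 1$ by Banach's classical theorem for Hilbert spaces, which was recalled in the introduction. At the left endpoint we have $\varphi(1) = \ccc(k,\ell_1) = k^k/k!$; this was pointed out explicitly in the introduction right after Equation \eqref{ec polarizacion}, and can also be deduced from the formula \eqref{Prop Sarant finite dim} by taking a single index equal to $k$ (the monomial $z_1 z_2 \cdots z_k$ saturates the polarization inequality on $\ell_1$, as shown in \cite{H}).

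With these three facts in hand, since $\varphi:[1,2]\to \mathbb R$ is continuous with $\varphi(1)=k^k/k!$ and $\varphi(2)=1$, Bolzano's intermediate value theorem yields, for every $c \in [1, k^k/k!]$, some $q \in [1,2]$ with $\varphi(q) = c$, i.e.\ $\ccc(k,\ell_q) = c$. This is exactly the statement of the corollary.

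The proof is essentially a one-line application of Bolzano once the three ingredients are in place. The only potentially delicate point is making sure the continuity result Proposition \ref{prop cont} genuinely covers the endpoints $p=1$ and $p=2$ (so that $\varphi$ is continuous on the closed interval, not merely its interior), but this is built into the statement of that proposition, which asserts continuity on the full range $1 \leq p \leq \infty$. Thus no further work is required.
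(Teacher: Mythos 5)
Your proof is correct and is essentially the paper's intended argument: continuity of $p\mapsto \ccc(k,\ell_p)$ from Proposition~\ref{prop cont}, the endpoint values $\ccc(k,\ell_2)=1$ (Banach) and $\ccc(k,\ell_1)=k^k/k!$ (the extremal polynomial from \cite{H} together with the general bound \eqref{ec polarizacion}), and then Bolzano's theorem. One small slip worth fixing: in formula \eqref{Prop Sarant finite dim} the value $k^k/k!$ arises from taking all exponents $k_i=1$ (i.e.\ the monomial $z_1\cdots z_k$), not from ``a single index equal to $k$'', which would give the value $1$.
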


\subsection*{Acknowledgments}
We thank the referees for their valuable comments that allow us to improve the presentation of the manuscript, as well as for the  alternative proof of Proposition \ref{ele-1}.

We thank S. Drury for his kind answers and the encouragement given at the beginning of this project.

%%%%%%%%%%%%%%%%%%%%%%%%%%%%%%%%%%%%%%%%%%%%%%%%%%%%%%%%%%%%%%%%%%%%%%%%%%
%%%%%%%%%%%%%%%%%%%%%%%%%%%%%%%%%%%%%%%%%%%%%%%%%%%%%%%%%%%%%%%%%%%%%%%%%%
%%%%%%%%%%%%%%%%%%%%%%%%%%%%%%%%%%%%%%%%%%%%%%%%%%%%%%%%%%%%%%%%%%%%%%%%%%

%%%%%%%%%%%%%%%%%%%%%%%%%%%%%%%%%%%%%%%%%%%%%%%%%%%%%%%%%%%%%%%%%%%%%%%%%%
%%%%%%%%%%%%%%%%%%%%%%%%%%%	%%%%%%%%%%%%%%%%%%%%%%%%%%%%%%%%%%%%%%%%%%%%%%%
%%%%%%%%%%%%%%%%%%%%%%%%%%%%%%%%%%%%%%%%%%%%%%%%%%%%%%%%%%%%%%%%%%%%%%%%%%

\end{document}